\newtheorem{theoremcounter}{Theorem Counter}[section]
\theoremstyle{definition}
\newtheorem{definition}[theoremcounter]{Definition}
\newtheorem{example}[theoremcounter]{Example}
\theoremstyle{plain}
\newtheorem{lemma}[theoremcounter]{Lemma}
\newtheorem{proposition}[theoremcounter]{Proposition}
\newtheorem{corollary}[theoremcounter]{Corollary}
\newtheorem{theorem}[theoremcounter]{Theorem}
\numberwithin{equation}{section}
\newcommand{\Z}{{\mathbb Z}}
\def\li{\text{\rm Li}}
\def\sts#1#2{\genfrac{\{}{\}}{0pt}{}{#1}{#2}}
\def\stf#1#2{\genfrac{[}{]}{0pt}{}{#1}{#2}}
\def\eal#1#2{\genfrac{}{}{0pt}{}{#1}{#2}}
\def\k#1{{#1}_B} % Komatsu told me the idea
\def\p#1{{#1}_R} 
\newcommand{\red}{\text{red} } % red or underlined
\newcommand{\blue}{\text{blue} } % blue or overlined
\title[Combinatorial aspects of poly-Bernoulli polynomials]{Combinatorial aspects of poly-Bernoulli polynomials and poly-Euler numbers}
\author{Be\'ata B\'enyi}
\address{\noindent Faculty of Water Sciences, University of Public Service, Baja, HUNGARY}
\email{benyi.beata@uni-nke.hu}
\author{Toshiki Matsusaka}
\address{Institute for Advanced Research, Nagoya University, Furo-cho, Chikusa-ku, Nagoya, 464-8601, JAPAN}
\email{matsusaka.toshiki@math.nagoya-u.ac.jp}
\date{\today}
\subjclass[2020]{05A05, 05A19, 11B68}
\keywords{Poly-Bernoulli polynomial, poly-Euler number}
\thanks{The second author was supported by JSPS KAKENHI Grant Number 20K14292.}
\begin{document}

%\begin{abstract}
%	In this article, we prove combinatorially some identities involving poly-Bernoulli polynomials and introduce a combinatorial model for poly-Euler numbers of both kinds that are used in the combinatorial proofs of identities.
%\end{abstract}

\begin{abstract}
	In this article, we introduce combinatorial models for poly-Bernoulli polynomials and poly-Euler numbers of both kinds. As their applications, we provide combinatorial proofs of some identities involving poly-Bernoulli polynomials.
\end{abstract}

\maketitle

%-------------------------------------------------------------------
\section{Introduction}
%-------------------------------------------------------------------

The literature on poly-Bernoulli numbers and related topics is developing nowadays more and more. Kaneko \cite{Kaneko1997} observed in 1997 that the generating function of the Bernoulli numbers can be generalized using the polylogarithm function and introduced \emph{poly-Bernoulli numbers}. Arakawa and Kaneko \cite{ArakawaKaneko1999_2} showed the strong connection of the newly introduced numbers to multiple zeta values.
These results motivated numerous studies and generalizations in different directions. Not only directly generalizations of poly-Bernoulli numbers were studied (poly-Bernoulli polynomials, multi-poly-Bernoulli polynomials, symmetrized poly-Bernoulli polynomials, etc.) but also the ``poly"-versions of different well-known number sequences and polynomials were introduced and investigated as, for instance, poly-Cauchy numbers, poly-Euler numbers, etc. 

The amazing fact about poly-Bernoulli numbers is that though the introduction was motivated by a simple analytical fact (it is ``nice") it turned out that the poly-Bernoulli numbers with negative indices play role in many combinatorial aspects. 
Poly-Bernoulli numbers and their relatives arise in different research fields, such as discrete tomography, mathematics of origami, biology, etc.~\cite{Brewbaker2008, HostenSullivant2010, LangHowell2018, LetsouCai2016, Ryser1957}. 
Numerous combinatorial interpretations are known, lonesum matrices, $\Gamma$-free matrices, alternative tableaux of rectangular shapes, Le-tableaux of rectangular shape, tree-like tableaux of rectangular shapes, Vesztergombi permutations, 
Callan permutations, permutations with excedance set containing the initial values, permutations that can be suffix arrays, acyclic orientations of complete bipartite graphs, etc. See \cite{BenyiHajnal2017} and the references therein. The frequent appearance of these numbers underlines their importance.

This article is organized as follows. First, we extend the notion of Callan permutations (Callan sequences) which are one of the first combinatorial interpretations of poly-Bernoulli numbers. 
As applications, we provide combinatorial proofs for some identities as, for instance, the relation between poly-Bernoulli polynomials and (hyper-)sums of powers of positive integers. 

Second, this extended model is also fundamental in the combinatorial model that we present for the poly-Euler numbers of the first kind introduced by Ohno and Sasaki \cite{OhnoSasaki2012} and poly-Euler numbers of the second kind recently introduced by Komatsu \cite{Komatsu2017,Komatsu2020}. 
We prove a number of the known identities combinatorially and give a combinatorial explanation for the relation between poly-Euler numbers and poly-Bernoulli numbers.

%-------------------------------------------------------------------
\section{Poly-Bernoulli polynomials} \label{s2}
%-------------------------------------------------------------------

Throughout this article, we refer to an element with the subscript $R$ (resp. $B$) as a ``red" (resp. ``blue") element. Poly-Bernoulli polynomials were defined in different ways in the literature~\cite{BayadHamahata2011, CoppoCandelpergher2010, KomatsuLuca2013}. 
We consider in this article the definition of Bayad and Hamahata~\cite{BayadHamahata2011}. 

\begin{definition} For every integer $k$, the  polynomials $(B_n^{(k)}(x))_{n = 0}^\infty$ are called \emph{poly-Bernoulli polynomials} and defined by
	\begin{align*}
		\sum_{n=0}^{\infty}B_n^{(k)}(x)\frac{t^n}{n!}=\frac{\li_k(1-e^{-t})}{1-e^{-t}}e^{xt},
	\end{align*}
where $\li_k(z) =\sum_{m=1}^{\infty} z^m/m^k$ is the $k$-th polylogarithm function.
\end{definition}
The numbers $B_n^{(k)}:= B_n^{(k)}(0)$ are the poly-Bernoulli numbers, introduced by Kaneko \cite{Kaneko1997} and studied since then extensively.
The classical Bernoulli polynomials are  $B_n(x) = (-1)^n B_n^{(1)}(-x)$.

We describe two ways to interpret these polynomials based on two generalizations of Stirling numbers of the second kind. 
In the first model, $x$ is assumed to be a positive integer, while in the second, $x$ marks a weight, hence, it is a formal variable, being arbitrary. These two different approaches are general in combinatorics. 
One gives a very concrete picture while the other allows more generality, greater freedom of abstraction. Both approaches have their own advantages.

Let us recall now the definition of Callan sequences that are enumerated by the poly-Bernoulli numbers with negative indices.  

Let $N=\{\p{1},\ldots, \p{n}\}\cup\{\p{*}\}$ (referred to as \red elements) and  $K=\{\k{1},\ldots, \k{k}\}\cup\{\k{*}\}$ (referred to as \blue elements). Let $R_1,\ldots, R_m,R^*$ be an ordered partition of the set $N$ into $m+1$ non-empty blocks ($0 \leq m \leq n$) and $B_1,\ldots,B_m,B^*$ an ordered partition of the set $K$ into $m+1$ non-empty blocks, where $R^*$ denotes the block that contains $\p{*}$ and $B^*$ denotes the block that contains $\k{*}$. 
We call $B^*$ and $R^*$ \emph{extra blocks} while the other blocks \emph{ordinary blocks}. A pair $(B_i; R_i)$ is called an \emph{ordinary Callan pair} and $(B^*; R^*)$ an \emph{extra Callan pair}. 
A \emph{Callan sequence of size $n \times k$} (or an \emph{$(n,k)$-Callan sequence}) is a linear arrangement of Callan pairs augmented by the extra pair 
\[(B_1; R_1)(B_2; R_2)\cdots(B_m; R_m)\cup(B^*; R^*).\]  
It is known that the poly-Bernoulli number $B_n^{(-k)} = B_n^{(-k)}(0)$ counts Callan sequences of size $n \times k$, (see~\cite[Theorem 5]{BenyiMatsusaka2021}).
 
\subsection{Extended Callan sequences}\label{subsec: extended_Callan}

This generalization is motivated by the so-called $r$-Stirling numbers studied first by Broder~\cite{Broder1984}. 
The \emph{$r$-Stirling numbers of the second kind}, $\sts{n}{k}_r$ count partitions of $[n+r]=\{1,2,\ldots, n+r\}$ into $k+r$ non-empty blocks such that the elements $\{1,2,\ldots, r\}$ are in distinct blocks. The generating function is given by
\begin{align*}
	\sum_{n=0}^{\infty} \sts{n}{k}_r \frac{t^n}{n!} =e^{rt}\frac{(e^t-1)^k}{k!}.
\end{align*}

We define an \emph{$r$-extended Callan sequence} in the following way. We augment the set $N$ with $r$ new \emph{special} \red elements $N' = N\cup\{\p{(n+1)},\p{(n+2)},\ldots, \p{(n+r)}\}$ and partition the set $N'$ into $m+r+1$ blocks such that the special elements and the extra element $\p{*}$ are in distinct blocks. 
We partition the set of \blue elements $K$ into $m+1$ blocks. As before, the blocks containing $\k{*}$ and $\p{*}$ create the extra Callan pair. 
The remaining blocks of \blue elements and the blocks of \red elements not containing any special elements form the ordinary Callan pairs. 

An $r$-extended Callan sequence of size $n \times k$ is a linear arrangement of ordinary Callan pairs augmented by the extra pair and the special blocks
\[
(B_1; R_1) \cdots (B_m; R_m) \cup (B^*; R^*) \cup (S_1, \p{(n+1)}) \cup \cdots \cup (S_r, \p{(n+r)})
\]
with $0 \leq m \leq n$, where $S_1,\ldots, S_r$ contain \red elements and can be empty.

\begin{example}A $3$-extended Callan sequence of size $9\times 6$ is 
 \begin{align*}(\k{5}; \p{6},\p{8})(\k{3},\k{6}; \p{4},\p{9})(\k{1},\k{2},\k{4}; \p{3}) \cup (\k{*}; \p{1}, \p{*}) \cup (\p{5},\p{\bf{10}}) \cup (\p{\bf{11}}) \cup (\p{2},\p{7},\p{\bf{12}}).
 	\end{align*}
 The special elements $\p{\bf{10}}$, $\p{\bf{11}}$, $\p{\bf{12}}$ are denoted boldface.
\end{example}

\begin{theorem}\label{theo: pBp_1}
	Let $\mathcal{EC}(n,k;r)$ denote the set of $r$-extended Callan sequences of size $n\times k$. Then we have
	\begin{align*}
		B_n^{(-k)}(r) = |\mathcal{EC}(n,k;r)|.
	\end{align*}
\end{theorem}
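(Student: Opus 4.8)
The plan is to show that the exponential generating function of $|\mathcal{EC}(n,k;r)|$ in the variable $n$ equals the known EGF of $B_n^{(-k)}(r)$, by decomposing an $r$-extended Callan sequence into its constituent structures and translating each into a generating-function factor. The starting point is the well-known formula for the ordinary Callan sequences,
\begin{align*}
\sum_{n=0}^{\infty} B_n^{(-k)} \frac{t^n}{n!} = \frac{\li_k(1-e^{-t})}{1-e^{-t}}\Big|_{k \to -k} = \text{(EGF of Callan sequences)},
\end{align*}
together with the defining generating function for the poly-Bernoulli polynomials, which gives $\sum_{n\ge 0} B_n^{(-k)}(r)\,t^n/n! = e^{rt}\cdot\big(\text{EGF of }B_n^{(-k)}\big)$ once one checks that shifting the argument of the polylogarithm generating function by $e^{xt}$ corresponds exactly to multiplication by $e^{xt}$.

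First I would recall (or re-derive from the $r$-Stirling description just given) that a Callan sequence of size $n \times k$ is equivalent to choosing an ordered sequence of $m$ Callan pairs plus one extra pair, where on the red side one partitions $[n]\cup\{*\}$ into $m+1$ nonempty blocks with $*$ in its own distinguished block. Second, I would observe that an $r$-extended Callan sequence adds exactly $r$ labeled, possibly empty, blocks of red elements on top of this; since each of the $n$ red elements $1,\dots,n$ independently either lands in one of these $r$ special blocks or participates in the underlying Callan sequence, the species-level (equivalently, EGF-level) effect of the $r$ special blocks is precisely a factor $e^{rt}$ — each red element has $r$ extra ``destinations,'' and labeled set-with-no-internal-structure over $r$ colors has EGF $e^{rt}$. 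Concretely, summing over how the $n$ red labels split between the special blocks and the Callan part and using the binomial convolution of EGFs yields
\begin{align*}
\sum_{n=0}^{\infty} |\mathcal{EC}(n,k;r)| \frac{t^n}{n!} = e^{rt}\sum_{n=0}^{\infty} B_n^{(-k)} \frac{t^n}{n!}.
\end{align*}

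Comparing this with the poly-Bernoulli polynomial generating function at $x=r$ and reading off coefficients of $t^n/n!$ gives $|\mathcal{EC}(n,k;r)| = B_n^{(-k)}(r)$, as claimed. The only genuinely delicate point — and the reason the theorem is called ``straightforward'' rather than trivial — is verifying that the $e^{xt}$ factor in the definition of $B_n^{(k)}(x)$ really does correspond to distributing elements into $r$ possibly-empty labeled blocks when $x=r$ is a nonnegative integer; this is immediate from the $r$-Stirling generating function $\sum_n \sts{n}{k}_r t^n/n! = e^{rt}(e^t-1)^k/k!$ recalled above, since the extended Callan model is built so that its red side is governed by $r$-Stirling-type partitions. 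An alternative, fully bijective route would be to exhibit a direct correspondence between $\mathcal{EC}(n,k;r)$ and a known combinatorial model for $B_n^{(-k)}(r)$ (for instance, vacillating tableaux or lonesum-type matrices with $r$ extra marked rows), but the generating-function argument above is cleaner and suffices.
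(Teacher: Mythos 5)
Your argument is essentially the paper's proof in generating-function clothing: the paper directly counts the decomposition (choose $j$ red elements for the special blocks in $\binom{n}{j}r^j$ ways, let the rest form a Callan sequence in $B_{n-j}^{(-k)}(0)$ ways) and matches the resulting sum $\sum_j \binom{n}{j}B_{n-j}^{(-k)}(0)r^j$ against the known expansion of $B_n^{(-k)}(r)$, which is exactly your binomial convolution / $e^{rt}$-factor step read off at the coefficient level. The proposal is correct and takes essentially the same approach.
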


\begin{proof}
	We can construct the blocks $S_1, \dots, S_r$ of an $r$-extended Callan sequence in ${n \choose j} r^j$ ways: choose $j$ \red elements in $\binom{n}{j}$ ways and choose for each chosen element a special block. 
	The remaining $n-j$ \red elements and $k$ \blue elements form Callan sequences in $B_{n-j}^{(-k)}(0)$ ways. 
	Thus, we have
	\begin{align}\label{Bernoulli-poly-exp}
		|\mathcal{EC}(n,k; r)| = \sum_{j=0}^n {n \choose j} B_{n-j}^{(-k)}(0) r^j,
	\end{align}
	which coincides with the formula for $B_n^{(-k)}(r)$ given in~\cite[Remark 3]{CoppoCandelpergher2010}.
\end{proof}

The combinatorial proofs of the following theorems easily follow.  

\begin{theorem}\cite{KarginCenkciDilCan2020}\label{theo:pBpol_closed form}	
	For non-negative integers $n$, $k$, and $r$, we have
	\begin{align}\label{def_comb}
		B_n^{(-k)}(r)=\sum_{m=0}^{\min(n,k)}(m!)^2\sts{k+1}{m+1}\sts{n}{m}_{r+1}.
	\end{align}
\end{theorem}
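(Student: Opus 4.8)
The plan is to give a direct bijective/counting argument by classifying $r$-extended Callan sequences according to the number $m$ of ordinary Callan pairs, and then matching the construction of each piece with the two generalized Stirling numbers appearing in \eqref{def_comb}. I would fix $m$ with $0 \le m \le \min(n,k)$ (noting that $m$ cannot exceed $n$ since each ordinary red block is nonempty, nor $k$ since each ordinary blue block is nonempty), and decompose the data of a sequence in $\mathcal{EC}(n,k;r)$ into: (i) a choice of the blue side, i.e.\ a partition of the $k+1$ blue elements $\{\k{1},\dots,\k{k},\k{*}\}$ into $m+1$ nonempty blocks with one distinguished block $B^*$ containing $\k{*}$, together with a linear order on the $m$ ordinary blocks; and (ii) a choice of the red side, i.e.\ a partition of the $n$ red elements together with the $r$ ``special'' marking slots and the element $\p{*}$, where the $m$ ordinary red blocks are nonempty, matched in order with the ordinary blue blocks.

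For step (i), I would argue that partitioning $\{\k{1},\dots,\k{k},\k{*}\}$ into $m+1$ nonempty blocks so that $\{1,2,\dots\}$-type conditions hold is counted by $\sts{k+1}{m+1}$ once we agree to treat $\k{*}$ as a ``special'' element forced into its own designated block; more precisely, choosing the blue blocks is the same as a set partition of a $(k+1)$-element set into $m+1$ blocks with one marked block, and marking one of the $m+1$ blocks as $B^*$ while leaving the ordinary ones unordered contributes $\sts{k+1}{m+1}$ (the marking of $B^*$ is absorbed by the convention that $\k{*}$'s block is $B^*$, matching the standard identity $\sts{k+1}{m+1}=\sum\binom{k}{\cdot}\sts{\cdot}{m}$). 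Then linearly ordering the $m$ ordinary blue blocks contributes a factor $m!$, which accounts for one of the two $m!$ factors. For step (ii), I would interpret the red side as a partition of the set $[n] \cup \{\p{n+1},\dots,\p{n+r}\} \cup \{\p{*}\}$ of size $n+r+1$ into $m + r + 1$ blocks in which the $r+1$ elements $\p{n+1},\dots,\p{n+r},\p{*}$ lie in distinct blocks (these being the $r$ special blocks and the extra block $R^*$), which is exactly the definition of $\sts{n}{m}_{r+1}$; then matching the $m$ ordinary red blocks with the already-ordered ordinary blue blocks requires choosing which ordinary red block pairs with which ordinary blue block, contributing the second factor of $m!$.

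Assembling these pieces, I would write
\begin{align*}
  |\mathcal{EC}(n,k;r)| = \sum_{m=0}^{\min(n,k)} \sts{k+1}{m+1} \cdot m! \cdot \sts{n}{m}_{r+1} \cdot m!,
\end{align*}
which together with \cref{theo: pBp_1} gives \eqref{def_comb}. The only subtlety, and the step I expect to require the most care, is pinning down exactly how the ``extra block'' and ``special block'' structure on the red side corresponds to the $r$-Stirling condition: one must check that declaring $\p{*}$ to lie in $R^*$ and the $r$ marking elements to lie in the $r$ labeled special blocks is precisely the requirement that a fixed $(r+1)$-element subset occupy distinct blocks, and that the labeling of the special blocks (they are ordered/labeled by \cref{def:extended Callan}) is automatically encoded by attaching the distinct marking elements $\p{n+1},\dots,\p{n+r}$ — so no extra $r!$ factor appears. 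Likewise one should confirm on the blue side that passing from $\sts{k}{m}$-type data to $\sts{k+1}{m+1}$ correctly absorbs the choice of which elements join $B^*$, so that no spurious binomial coefficient is left over. Once these bookkeeping identifications are verified, the formula follows immediately; alternatively, one can bypass the bijection entirely by expanding $B_{n-j}^{(-k)}(0)$ in \eqref{Bernoulli-poly-exp} via the known closed form $B_n^{(-k)} = \sum_m (m!)^2 \sts{n}{m}\sts{k}{m}$-type expression and using the Vandermonde-type convolution $\sum_j \binom{n}{j} \sts{n-j}{m} r^j = \sts{n}{m}_{r+1}$, but the bijective route is more in keeping with the combinatorial spirit of the paper.
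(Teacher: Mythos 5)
Your proposal is correct and follows essentially the same route as the paper: the paper likewise fixes the number $m$ of ordinary pairs, partitions $N\cup\{\p{n+1},\dots,\p{n+r}\}$ into $m+r+1$ blocks with $\p{*},\p{n+1},\dots,\p{n+r}$ in distinct blocks (giving $\sts{n}{m}_{r+1}$), partitions $K$ into $m+1$ blocks (giving $\sts{k+1}{m+1}$), and accounts for the ordering/pairing of ordinary blocks by $(m!)^2$. Your bookkeeping of the special blocks via the marking elements (hence no extra $r!$) matches the paper's convention exactly.
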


\begin{proof}
	Let $m$ be the number of ordinary Callan pairs in the $r$-extended Callan sequence. There are $\sts{n}{m}_{r+1}$ ways to partition $N'$ into $m+r+1$ non-empty blocks such that the special elements and $\p{*}$ are in distinct blocks. 
	There are $\sts{k+1}{m+1}$ ways to partition $K$ into $m+1$ non-empty blocks. We arrange the blocks of \red and \blue elements that do not contain any special elements nor extra elements in $(m!)^2$ ways.
\end{proof}

\begin{theorem}\cite{KarginCenkciDilCan2020}\label{Bnkr-anti}
For $n, k, r \geq 0$, it holds
\begin{align}\label{pBpol_sieve}
	B_n^{(-k)}(r)=\sum_{m=0}^{k}(-1)^{k-m}\sts{k}{m}m!(m+r+1)^n.
\end{align}
\end{theorem}

\begin{proof}
	We prove it by the inclusion-exclusion principle (see~\cite[Section 2.1]{Stanley1997}). Our objects here are permutations
	\begin{align}\label{permu-Thm2.5}
		(R_0 B_1 R_1 B_2 R_2 \cdots B_j R_j; S_1; \cdots; S_r) \quad (1 \leq j \leq k)
	\end{align}
	of $\{\p{1}, \dots, \p{n}\}$ and $\{\k{1}, \dots, \k{k}\}$ constructed as follows.
	\begin{enumerate}
		\item $S_1, \dots, S_r$ are (possible empty) blocks of \red elements. In each block, all \red elements are in increasing order.
		\item The remaining \red and \blue elements form a permutation in which each substring of \red elements is increasing. Note that the blocks $R_0$ and $R_j$ can be empty.
	\end{enumerate}
	If all \blue elements in each $B_i$ are also in increasing order, then we get an $r$-extended Callan sequence as
	\begin{align*}
	\begin{cases}
		(B_1; R_1) \cdots (B_j; R_j) \cup (R_0, \p{*}; \k{*}) \cup (S_1, \p{(n+1)}) \cup \cdots \cup (S_r, \p{(n+r)}) \quad \text{if } R_j \neq \emptyset,\\
		(B_1; R_1) \cdots (B_{j-1}; R_{j-1}) \cup (R_0, \p{*}; B_j, \k{*}) \cup (S_1, \p{(n+1)}) \cup \cdots \cup (S_r, \p{(n+r)}) \quad \text{if } R_j = \emptyset.
	\end{cases}
	\end{align*}
	
	To count the number of such permutations, we recall the notion of the \emph{descent set} $D(p)$. 
	For a permutation $p = p_1 p_2 \cdots p_n$, the entry $p_i$ is called a \emph{descent} of $p$ if $p_i > p_{i+1}$. We let $D(p)$ denote the set of all descents of $p$.
	
	For a subset $A$ of $\{\k{2}, \dots, \k{k}\}$, let $\alpha(A)$ denote the number of permutations in \eqref{permu-Thm2.5} with the condition $D(B_1) \cup \cdots \cup D(B_j) = A$. 
	Let $\beta(A)$ denote the number of permutations with $D(B_1) \cup \cdots \cup D(B_j) \supset A$. Then $\alpha(\emptyset)$ counts the number of $r$-extended Callan sequences, that is, $\alpha(\emptyset) = B_n^{(-k)}(r)$. 
	
	For $k=0$ and $k=1$, we easily see that
	\[
		\alpha(\emptyset) = \begin{cases}
			(r+1)^n &\text{if } k=0,\\
			(r+2)^n &\text{if } k=1.
		\end{cases}
	\]
	
	For $k \geq 2$, by the inclusion-exclusion principle~\cite[Theorem 2.1.1]{Stanley1997}, we have
	\[
		\alpha(\emptyset) = \sum_{A \subset \{\k{2}, \dots, \k{k}\}} (-1)^{|A|} \beta (A) = \sum_{m=0}^{k-1} (-1)^m \sum_{\substack{A \subset \{\k{2}, \dots, \k{k}\}\\ |A| = m}} \beta (A).
	\]
	Now the inner sum is given as follows. First, we construct an ordered partition of $\{\k{1}, \dots, \k{k}\}$ with $(k-m)$-non-empty blocks in $\sts{k}{k-m} (k-m)!$ ways. In each block, we record the elements in decreasing order. 
	Next, to obtain a permutation in \eqref{permu-Thm2.5}, we insert \red elements of $\{\p{1},\dots, \p{n}\}$ before and after the blocks of \blue elements and into the special blocks $S_1, \dots, S_r$ in $(k-m+1+r)^n$ ways. 
	If two blocks of \blue elements are consecutive, then we connect them. The resulting permutations have at least $m$ descents in blocks of \blue elements.
	
	Therefore we have
	\begin{align*}
		\alpha(\emptyset) &= \sum_{m=0}^{k-1} (-1)^m \sts{k}{k-m} (k-m)! (k-m+1+r)^n,
	\end{align*}
	which concludes the proof.
\end{proof}

\begin{example}
	The number $\beta(\emptyset) = k! (k+1+r)^n$ counts all permutations in \eqref{permu-Thm2.5}. Let $(n,k,r) = (1,3,0)$ for instance. 
	In the above proof, the permutation $\k{3} \k{2} \k{1} \p{1}$ is counted as $(\k{3}) (\k{2}) (\k{1}) \p{1}$ with $m=0$, $(\k{3} \k{2}) (\k{1}) \p{1}$ and $(\k{3}) (\k{2} \k{1}) \p{1}$ with $m=1$, and $(\k{3} \k{2} \k{1}) \p{1}$ with $m=2$.
\end{example}

The symmetry of the roles of the sets $N$ and $K$ in Callan sequences is lost in the model of $r$-extended Callan sequences.
We define now the $(r,s)$-extended Callan sequences to restore this symmetry in some sense.

\begin{definition}
 	An \emph{$(r,s)$-extended Callan sequence of size $n \times k$} is a linear arrangement of Callan pairs augmented by the extra pair, $r$ special blocks of \red elements, and $s$ special blocks of \blue elements, that is,
	\begin{align*}
		&(B_1; R_1) \cdots (B_m; R_m) \cup (B^*; R^*)\\
		&\cup (S_1, \p{(n+1)}) \cup \cdots (S_r, \p{(n+r)}) \cup (C_1, \k{(k+1)}) \cup \cdots \cup (C_s, \k{(k+s)}).
	\end{align*}
	Here $R_1, \dots, R_m, R^*, S_1, \dots, S_r$ (resp. $B_1, \dots, B_m, B^*, C_1, \dots, C_s$) is an ordered partition of the set $N$ (resp. $K$) with possible empty blocks $S_1, \dots, S_r$ (resp. $C_1, \dots, C_s$). 
\end{definition}

Bayad--Hamahata~\cite{BayadHamahata2011} defined the following function:
\begin{align*}
	C_n^{(-k)}(x,y) = \sum_{j=0}^{k}\binom{k}{j}B_n^{(-j)}(x)y^{k-j}.
\end{align*}
\begin{theorem} The number of $(r,s)$-extended Callan sequences of size $n\times k$ is $C_n^{(-k)}(r,s)$.
\end{theorem}
\begin{proof}
The proof is analogous to that of \cref{theo: pBp_1}.
\end{proof}

\begin{theorem}[Duality~\cite{BayadHamahata2011}]
	We have 
	\begin{align*}
		C_n^{(-k)}(r,s)  = C_k^{(-n)}(s,r).
	\end{align*}
\end{theorem}

\begin{proof}
By exchanging the role of \red and \blue elements, we obtain that the size of the set of $(r,s)$-extended Callan sequences of size $n\times k$ is the same as the set of $(s,r)$-extended Callan sequences of size $k\times n$. 
\end{proof}

\begin{theorem}[Inversion formula~\cite{BayadHamahata2011}]
	For $n,k,r \geq 0$, we have
	\begin{align*}
		B_n^{(-k)}(r) = \sum_{j=0}^{k} (-1)^j \binom{k}{j} C_n^{(-k+j)}(r,s) s^j.
	\end{align*}
\end{theorem}

\begin{proof}
	We use the inclusion-exclusion principle similar to the proof of \cref{Bnkr-anti}. For any $A \subset \{\k{1}, \dots, \k{k}\}$, let $\alpha(A)$ (resp. $\beta (A)$) denote the number of $(r,s)$-extended Callan sequences with special blocks of \blue elements $C_1 \cup \cdots \cup C_s = A$ (resp. $C_1 \cup \cdots \cup C_s \supset A$). 
	Then the inclusion-exclusion principle implies that
	\[
		B_n^{(-k)}(r) = \alpha(\emptyset) = \sum_{A \subset \{\k{1}, \dots, \k{k}\}} (-1)^{|A|} \beta (A).
	\]
	For each $A$, we construct $(r,s)$-extended Callan sequences such that $A \subset C_1 \cup \cdots \cup C_s$ as follows. 
	By our assumption, all elements of $A$ are in $C_1 \cup \cdots \cup C_s$ in $s^{|A|}$ ways. The remaining elements form an $(r,s)$-extended Callan sequences of size $n \times (k-|A|)$. Thus we have
	\[
		\beta (A) = C_n^{(-k+|A|)}(r,s) s^{|A|},
	\]
	which implies the assertion.
\end{proof}

\begin{theorem}[Closed formula~\cite{BayadHamahata2011}]
	For $n,k,r \geq 0$, we have
	\begin{align*}
		C_n^{(-k)} (r,s) = \sum_{m=0}^{\min(n,k)} (m!)^2 \left(\sum_{i=0}^{n}(r+1)^{n-i}\binom{n}{i}\sts{i}{m}\right)\left( \sum_{j=0}^{k}(s+1)^{k-j}\binom{k}{j}\sts{j}{m}\right).
	\end{align*}
\end{theorem}
\begin{proof}
	We enumerate $(r,s)$-extended Callan sequences. Let $m$ be the number of ordinary block pairs, $i$ the number of \red elements and $j$ the number of \blue elements contained in the ordinary blocks. 
	Choose from the \red elements in $\binom{n}{i}$ ways and the \blue elements in $\binom{k}{j}$ ways these elements and construct the sequence of the ordinary pairs in $(m!)^2\sts{i}{m}\sts{j}{m}$ ways. 
	Additionally, we create the (possible empty) extra and special blocks from the \red and \blue elements in $(r+1)^{n-i}$ and $(s+1)^{k-j}$ ways respectively. 
\end{proof}
\begin{theorem}\label{theo:sym_pB}
	For non-negative integers $n$, $k$, $r$, and $s$, it holds
	\begin{align*}
		C_n^{(-k)}(r,s)  = \sum_{m=0}^{\min(n,k)} (m!)^2\sts{n}{m}_{r+1}\sts{k}{m}_{s+1}.
	\end{align*}
\end{theorem}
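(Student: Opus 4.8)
The plan is to give a short combinatorial argument that runs parallel to the proof of \cref{theo:pBpol_closed form}, with an algebraic back-up derivation from that same theorem. First I would pin down the model: an \emph{$(r,s)$-extended Callan sequence of size $n\times k$} is an $(n,k)$-Callan sequence augmented by $r$ labeled (possibly empty) special blocks of red elements and $s$ labeled (possibly empty) special blocks of blue elements. As in \cref{subsec: extended_Callan}, it is convenient to replace the $r$ empty-allowed red special blocks by genuine marker elements $\p{n+1},\dots,\p{n+r}$ forced into distinct blocks, and likewise to introduce blue markers $\k{k+1},\dots,\k{k+s}$; then such an object amounts to a partition of $N\cup\{\p{n+1},\dots,\p{n+r}\}$ with $\p{*},\p{n+1},\dots,\p{n+r}$ in distinct blocks, a partition of $K\cup\{\k{k+1},\dots,\k{k+s}\}$ with $\k{*},\k{k+1},\dots,\k{k+s}$ in distinct blocks, a matching of the remaining red blocks with the remaining blue blocks, and a linear order on the resulting Callan pairs. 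The step I expect to require the most care is checking that these objects are counted by $C_n^{(-k)}(r,s)$: one has to reconcile the defining sum $C_n^{(-k)}(x,y)=\sum_{j}\binom kj B_n^{(-j)}(x)y^{k-j}$ with the model, reading $\binom kj$ as the choice of which $j$ blue elements among $\k1,\dots,\k k$ occupy ordinary or extra blocks and $y^{k-j}=s^{k-j}$ as the independent assignment of the other $k-j$ blue elements to the $s$ special blue blocks, the surviving factor $B_n^{(-j)}(r)$ then counting the $r$-extended Callan sequence of size $n\times j$ via \cref{theo: pBp_1}.

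Granting the model, the identity is immediate: sort $(r,s)$-extended Callan sequences by the number $m$ of ordinary Callan pairs, with $0\le m\le\min(n,k)$. The red data (the extra block $R^*$, the $r$ special blocks, and the $m$ ordinary red blocks) is exactly a partition of an $(n+r+1)$-element set into $m+r+1$ blocks with $r+1$ prescribed elements in distinct blocks, hence is enumerated by $\sts nm_{r+1}$; symmetrically the blue data is enumerated by $\sts km_{s+1}$. Gluing the two sides costs $m!$ for the matching of the $m$ ordinary blue blocks with the $m$ ordinary red blocks and another $m!$ for the linear order of the $m$ Callan pairs, so summing over $m$ yields
\begin{align*}
C_n^{(-k)}(r,s)=\sum_{m=0}^{\min(n,k)}(m!)^2\sts nm_{r+1}\sts km_{s+1}.
\end{align*}

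Alternatively, one can sidestep the model entirely and deduce the statement algebraically from \cref{theo:pBpol_closed form}. Substituting $B_n^{(-j)}(r)=\sum_{m}(m!)^2\sts{j+1}{m+1}\sts nm_{r+1}$ into the definition of $C_n^{(-k)}(r,s)$ and swapping the order of summation, everything collapses to the single identity $\sum_{j=m}^{k}\binom kj\sts{j+1}{m+1}s^{k-j}=\sts km_{s+1}$. This in turn follows from the recurrence $\sts{j+1}{m+1}=\sum_{i}\binom ji\sts im$, the rearrangement $\binom kj\binom ji=\binom ki\binom{k-i}{j-i}$, the binomial theorem $\sum_{l}\binom{k-i}{l}s^{k-i-l}=(1+s)^{k-i}$, and the classical expansion $\sts km_{\rho}=\sum_{i}\binom ki\rho^{\,k-i}\sts im$ taken at $\rho=s+1$. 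I would present the combinatorial proof as the main argument and mention this computation only briefly.
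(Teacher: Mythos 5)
Your combinatorial argument is correct and is exactly the one the paper intends: the paper merely asserts the $(r,s)$-extended Callan sequence interpretation of $C_n^{(-k)}(r,s)$ and declares the identity ``immediate,'' whereas you supply the two missing details --- the reconciliation of the defining sum $\sum_j\binom{k}{j}B_n^{(-j)}(r)s^{k-j}$ with the model, and the count by the number $m$ of ordinary pairs with the factor $(m!)^2$ for matching and linear order --- both of which check out. The algebraic back-up via the identity $\sum_j\binom{k}{j}\sts{j+1}{m+1}s^{k-j}=\sts{k}{m}_{s+1}$ is also valid, but it is redundant given the combinatorial proof and can safely be omitted.
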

\begin{proof}
	The proof is analogous to that of \cref{theo:pBpol_closed form}.
\end{proof}

\subsection{Abundant Callan sequences}\label{subsec: abundant Callan}
The $r$-extended Callan sequences are defined for non-negative integers $r$. 
Next, we introduce a combinatorial model for the poly-Bernoulli polynomials with an indeterminate $x$.

For this sake, we consider Carlitz's weighted Stirling numbers~\cite{Carlitz1980-1, Carlitz1980-2} instead of $r$-Stirling numbers.

\begin{definition}
	For non-negative integers $n$ and $m$, let $\mathcal{S}_2(n,m)$ denote the set of partitions of $\{1, \dots, n\} \cup \{\dagger\}$ into $m+1$ non-empty blocks. Let $P^\dagger$ denote the block containing $\dagger$. 
	Let $w_{\mathcal{S}_2}: \mathcal{S}_2(n,m) \to \Z_{\geq 0}$ denote the weight $w_{\mathcal{S}_2}(p) = |P^\dagger|-1$. We define the \emph{Stirling polynomial of the second kind} by
	\[
		\sts{n}{m}_x = \sum_{p \in \mathcal{S}_2(n,m)} x^{w_{\mathcal{S}_2}(p)}.
	\]
\end{definition}

Note that $\sts{n}{0}_x = x^n$ for  $n \geq 0$ and $\sts{n}{m}_x = 0$ for $m > n$. 
Moreover, for  $x = r \in \Z_{\geq 0}$, this polynomial coincides with the $r$-Stirling number of the second kind.
\begin{lemma}\label{rec-stirling-sec}
	For $n,m \geq 0$, we have
	\begin{align*}
		\sts{n+1}{m}_x &= (x+m)\sts{n}{m}_x + \sts{n}{m-1}_x,\\
		\sts{n}{m}_{x+1} &= (m+1) \sts{n}{m+1}_x + \sts{n}{m}_x.
	\end{align*}
\end{lemma}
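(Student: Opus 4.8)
The plan is to prove each of the two recursions by a direct combinatorial argument on the sets $\mathcal{S}_2(n,m)$, reading off the contribution $x^{w_{\mathcal{S}_2}(p)}$ of each partition; a one-line generating-function check is available as a backup.

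\emph{First recursion.} I would sort a partition $p \in \mathcal{S}_2(n+1,m)$ by the block $B$ that contains the element $n+1$. If $B = \{n+1\}$, deleting $B$ yields a partition in $\mathcal{S}_2(n,m-1)$ (forcing $m \geq 1$), and since $n+1 \notin P^\dagger$ the weight is unchanged, so these partitions contribute $\sts{n}{m-1}_x$. If $|B| \geq 2$, deleting $n+1$ from $B$ yields $p' \in \mathcal{S}_2(n,m)$, and reversing this is "$(x+m)$-to-one" weightwise: from $p'$ one may place $n+1$ into any of the $m$ ordinary blocks (weight unchanged, total $m\,x^{w_{\mathcal{S}_2}(p')}$) or into $P^\dagger$ (weight increases by one, total $x\cdot x^{w_{\mathcal{S}_2}(p')}$). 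Summing the two cases gives $\sts{n+1}{m}_x = (x+m)\sts{n}{m}_x + \sts{n}{m-1}_x$. One should check the degenerate value $m=0$: the singleton case is then vacuous and only the $P^\dagger$-insertion survives, consistently with $\sts{n}{-1}_x = 0$.

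\emph{Second recursion.} Here I would expand the weight binomially: for $p \in \mathcal{S}_2(n,m)$ with $w_{\mathcal{S}_2}(p) = |P^\dagger|-1$ one has $(x+1)^{|P^\dagger|-1} = \sum_{T \subseteq P^\dagger \setminus \{\dagger\}} x^{|T|}$, so $\sts{n}{m}_{x+1}$ is the $x$-weighted count of pairs $(p,T)$ with $p \in \mathcal{S}_2(n,m)$ and $T \subseteq P^\dagger \setminus \{\dagger\}$, the weight of $(p,T)$ being $x^{|T|}$. The pairs with $T = P^\dagger \setminus \{\dagger\}$ reproduce $\sts{n}{m}_x$. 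For $T$ a proper subset, split $P^\dagger$ into the new extra block $T \cup \{\dagger\}$ and the new (non-empty) ordinary block $P^\dagger \setminus (T \cup \{\dagger\})$, keeping all other blocks; this produces a partition $q \in \mathcal{S}_2(n,m+1)$ with a distinguished ordinary block, and merging any chosen ordinary block of a $q \in \mathcal{S}_2(n,m+1)$ into its extra block inverts the construction. Since the map is weight-preserving ($|T| = |Q^\dagger|-1$) and the distinguished block contributes a factor $m+1$, these pairs contribute $(m+1)\sts{n}{m+1}_x$, and altogether $\sts{n}{m}_{x+1} = (m+1)\sts{n}{m+1}_x + \sts{n}{m}_x$.

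\emph{Main obstacle, and an alternative.} The only delicate point is verifying that the splitting map in the second recursion is a genuine bijection — in particular that "proper subset $T$" corresponds exactly to "non-empty new ordinary block" and that the choice of distinguished block accounts precisely for the factor $m+1$; the rest is bookkeeping. If one prefers to bypass the combinatorics, both identities fall out of the exponential generating function $\sum_{n \geq 0}\sts{n}{m}_x \frac{t^n}{n!} = e^{xt}\frac{(e^t-1)^m}{m!}$, which is itself immediate from the definition ($e^{xt}$ records the $x$-weighted elements adjoined to $\dagger$, and $(e^t-1)^m/m!$ the remaining $m$ ordinary blocks): differentiating in $t$ and writing $e^t = (e^t-1)+1$ yields the first recursion, while factoring $e^{(x+1)t} = e^{xt}\big((e^t-1)+1\big)$ yields the second.
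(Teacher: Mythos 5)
Your proof is correct, and both arguments (the combinatorial one and the generating-function backup) go through without gaps. The paper itself offers no proof of this lemma --- it is stated as a "basic recursion" to be recalled, presumably referring the reader to the literature on $r$-Stirling numbers and Carlitz's weighted Stirling numbers --- so there is no in-paper argument to compare against. What you supply is exactly the kind of proof the paper's combinatorial framework invites: the first recursion by conditioning on the block of $n+1$ and noting that insertion into $P^\dagger$ carries the extra factor $x$ while insertion into an ordinary block does not; the second by the binomial expansion $(x+1)^{|P^\dagger|-1}=\sum_{T\subseteq P^\dagger\setminus\{\dagger\}}x^{|T|}$ and the split/merge bijection, whose inverse you correctly identify (merging a chosen ordinary block of $q\in\mathcal{S}_2(n,m+1)$ into $Q^\dagger$ recovers the pair $(p,T)$ with $T$ proper, and the $m+1$ choices of block account for the coefficient). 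Your boundary checks ($m=0$ forcing $\sts{n}{-1}_x=0$, and $T=P^\dagger\setminus\{\dagger\}$ giving the $\sts{n}{m}_x$ term) are the right ones. The generating-function route via $\sum_{n\ge 0}\sts{n}{m}_x\frac{t^n}{n!}=e^{xt}\frac{(e^t-1)^m}{m!}$ is also valid and is the quickest way to see that both identities hold for arbitrary real $x$, which is the generality the lemma claims; the combinatorial argument establishes the polynomial identity for all $x$ as well, since it is an identity of polynomials verified coefficientwise. Either argument alone would suffice.
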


\begin{proof}
	We define a map $f: \mathcal{S}_2(n+1, m) \to \mathcal{S}_2(n,m)$ by deleting $n+1$. For each $p \in \mathcal{S}_2(n+1, m)$, the following three cases occur.
	\begin{enumerate}
		\item If $n+1$ is in the special block $P^\dagger$, then the number of blocks is preserved and the weight is decreased by one.
		\item If $n+1$ is in an ordinary block with other elements, then the number of blocks and the weight are preserved.
		\item If $n+1$ is alone in a block, then the number of blocks is decreased by one and the weight is preserved.
	\end{enumerate}
	Therefore, we have
	\[
		\sts{n+1}{m}_x = x \sts{n}{m}_x + m \sts{n}{m}_x + \sts{n}{m-1}_x,
	\]
	which implies the first assertion.
	
	The recurrence relation is translated into the generating function
	\[
		\frac{(e^t-1)^m}{m!} e^{tx} = \sum_{n=m}^\infty \sts{n}{m}_x \frac{t^n}{n!}.
	\]
	The equation
	\[
		\frac{(e^t-1)^m}{m!} e^{t(x+1)} = \frac{(e^t-1)^m}{m!} e^{tx} ((e^t-1)+1)
	\]
	implies the second assertion.
\end{proof}

We define \emph{abundant Callan sequences}.
Let $N = \{\p{1}, \dots, \p{n}\} \cup \{\p{*}\}$ denote as before the set of $n+1$ \red elements and $K = \{\k{1}, \dots, \k{k}\} \cup \{\k{*}\}$ the set of $k+1$ \blue elements.
Let $R_1, \dots, R_m, R^*, S$ be an ordered partition of $N$ into $m+2$ blocks ($0 \leq m \leq n$). 
The block containing $\p{*}$ is denoted by $R^*$. We assume that the blocks $R_1, \dots, R_m$ are non-empty and the block $S$ can be empty. 
Similarly, let $B_1, \dots, B_m, B^*$ be an ordered partition of $K$ into $m+1$ non-empty blocks, where the block containing $\k{*}$ is denoted by $B^*$.

\begin{definition}
	An \emph{abundant Callan sequence} of size $n \times k$, or \emph{$(n,k)$-abundant Callan sequence}, is a sequence of ordinary pairs $(B_i, R_i)$ with an extra pair $(B^*, R^*)$ and a special block $S$:
\[
	C = (B_1; R_1) \cdots (B_m; R_m) \cup (B^*; R^*) \cup S.
\]

\end{definition} 

For each abundant Callan sequence $C$, we define the weight by $w_{\mathcal{AC}}(C) = |S|$.

\begin{theorem}
	For $n, k \geq 0$, let $\mathcal{AC}(n,k)$ denote the set of all abundant Callan sequences of size $n \times k$. Then we have
	\[
	\sum_{C \in \mathcal{AC}(n,k)} x^{w_{\mathcal{AC}}(C)} = B_n^{(-k)}(x).
	\]
\end{theorem}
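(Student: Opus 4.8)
The plan is to compute the weighted sum by separating the blue data from the red data and then matching the result with \cref{theo: pBp_1}. For a fixed $m$, an $(n,k)$-abundant Callan sequence $C=(B_1,R_1)\cdots(B_m,R_m)\cup(B^*,R^*)\cup\overline{R}$ is determined by two independent choices: the ordered tuple of blue blocks $(B_1,\dots,B_m,B^*)$ --- a set partition of the $(k+1)$-element set $K$ into $m+1$ nonempty blocks with $\k{*}\in B^*$, together with a linear order on the $m$ ordinary blocks --- and the ordered tuple of red blocks $(R_1,\dots,R_m,R^*,\overline{R})$. Since the ordinary pairs arise by zipping the two sequences together, these choices are independent, so
\[
\sum_{C\in\mathcal{AC}(n,k)}x^{w_{\mathcal{AC}}(C)}=\sum_{m=0}^{\min(n,k)} m!\sts{k+1}{m+1}\,W_m(x),
\]
where $W_m(x)=\sum_p x^{|\overline{R}|}$ runs over ordered tuples $p=(R_1,\dots,R_m,R^*,\overline{R})$ that partition $N$ with $R_1,\dots,R_m$ nonempty, $\p{*}\in R^*$, and $\overline{R}$ possibly empty; the range $m\le\min(n,k)$ is forced by non-emptiness.

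Next I would evaluate this at $x=r$ for a nonnegative integer $r$. Given such a $C$ together with a map $f\colon\overline{R}\to\{1,\dots,r\}$, replace the single special block $\overline{R}$ by the $r$ labeled (possibly empty) blocks $f^{-1}(1),\dots,f^{-1}(r)$ and leave everything else untouched; this yields an $r$-extended Callan sequence of size $n\times k$ in the sense of \cref{def:extended Callan}, and $(C,f)\mapsto$ (this sequence) is a bijection onto $\mathcal{EC}(n,k;r)$ with exactly $r^{w_{\mathcal{AC}}(C)}$ maps $f$ for each $C$. Hence
\[
\sum_{C\in\mathcal{AC}(n,k)}r^{w_{\mathcal{AC}}(C)}=|\mathcal{EC}(n,k;r)|=B_n^{(-k)}(r)
\]
by \cref{theo: pBp_1}. (Equivalently, at the level of the decomposition above this is the statement $W_m(r)=m!\sts{n}{m}_{r+1}$, so that the whole identity becomes \cref{theo:pBpol_closed form}; I would present whichever phrasing reads more cleanly.)

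To pass from nonnegative integers to a formal variable, note that
\[
\sum_{C\in\mathcal{AC}(n,k)}x^{w_{\mathcal{AC}}(C)}=\sum_{j\ge0}\#\{C\in\mathcal{AC}(n,k):|\overline{R}|=j\}\,x^j
\]
is visibly a polynomial in $x$, and $B_n^{(-k)}(x)$ is a polynomial; having shown they agree at every $x=r\in\Z_{\ge0}$, they are equal, which is the claim.

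I expect the only delicate point to be the bijection $(C,f)\leftrightarrow\mathcal{EC}(n,k;r)$: one must check that turning the special block $\overline{R}$ with its colouring $f$ into the $r$ labeled, possibly empty special blocks matches all the constraints on both sides --- non-emptiness of $B_1,\dots,B_m,B^*$ and of $R_1,\dots,R_m,R^*$, the possibly-empty status of the special block(s), and the fact that the core pair-sequence $(B_1,R_1)\cdots(B_m,R_m)\cup(B^*,R^*)$ is left unchanged --- and that nothing is over- or under-counted. Everything else is routine bookkeeping, and once this bijection and \cref{theo: pBp_1} are in place the polynomial-identity step finishes the proof.
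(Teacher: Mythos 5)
Your proof is correct, but it takes a different route from the paper's. The paper's proof is a one-step direct computation with $x$ kept as a formal variable throughout: grouping abundant Callan sequences by the size $j$ of the special block $\overline{R}$ gives $\binom{n}{j}B_{n-j}^{(-k)}(0)$ sequences of weight $j$, so the weighted sum is $\sum_{j=0}^n\binom{n}{j}B_{n-j}^{(-k)}(0)\,x^j$, which is identified with $B_n^{(-k)}(x)$ via the expansion \cref{Bernoulli-poly-exp} already recorded in the proof of \cref{theo: pBp_1}. You instead specialize to a nonnegative integer $x=r$, build a bijection between pairs (abundant Callan sequence, $r$-colouring of $\overline{R}$) and $r$-extended Callan sequences, invoke \cref{theo: pBp_1}, and finish by noting that two polynomials agreeing at all nonnegative integers coincide. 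This is sound (including the edge case $r=0$, where $0^{w}$ correctly selects the sequences with empty $\overline{R}$), and it has the merit of making the relationship between the two combinatorial models of \cref{def:extended Callan} and the abundant model explicit; the cost is the extra interpolation step, which the paper avoids, and your opening decomposition by the number $m$ of ordinary pairs is not actually used in the argument (as you half-acknowledge, it only resurfaces as the alternative phrasing via $W_m(r)=m!\sts{n}{m}_{r+1}$ and \cref{theo:pBpol_closed form}). Since both routes ultimately rest on the same external formula \cref{Bernoulli-poly-exp} for $B_n^{(-k)}$, neither is more self-contained than the other; the paper's is simply shorter.
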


\begin{proof}
	For each $0 \leq j \leq n$, we count the number of elements $C \in \mathcal{AC}(n,k)$ such that $w_\mathcal{AC}(C) = j$. 
	First, the special block of size $j$ can be chosen in ${n \choose j}$ ways. From the remaining $n-j$ \red elements and $k$ \blue elements, a Callan sequence of size $(n-j) \times k$ can be constructed in $B_{n-j}^{(-k)}(0)$ ways. Hence, we have
	\[
		\sum_{C \in \mathcal{AC}(n,k)} x^{w_{\mathcal{AC}}(C)} = \sum_{j=0}^n \binom{n}{j} B_{n-j}^{(-k)} (0) x^j,
	\]
	which equals $B_n^{(-k)}(x)$.
\end{proof}

We provide a similar claim to \cref{theo:pBpol_closed form}.

\begin{theorem}
	For $n,k \geq 0$, we have
	\[
	B_n^{(-k)}(x)=\sum_{m=0}^{\min(n,k)}(m!)^2\sts{k+1}{m+1}\sts{n}{m}_{x+1}.
	\]
\end{theorem}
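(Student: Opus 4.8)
The plan is to reuse the abundant Callan sequence model: by the preceding theorem, $\sum_{C \in \mathcal{AC}(n,k)} x^{w_{\mathcal{AC}}(C)} = B_n^{(-k)}(x)$, and the left-hand side is visibly a polynomial in $x$ (its exponents are non-negative integers), so it is enough to show that this weighted sum equals $\sum_{m} (m!)^2 \sts{k+1}{m+1}\sts{n}{m}_{x+1}$. Both sides being polynomials in $x$, it even suffices to verify the identity for $x = r \in \Z_{\geq 0}$, which is convenient because then $\sts{n}{m}_{r+1}$ is the $(r+1)$-Stirling number of the second kind with the concrete block description recalled in \cref{subsec: extended_Callan}. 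The computation is then carried out by refining the sum over $\mathcal{AC}(n,k)$ according to the number $m$ of ordinary pairs $(B_i,R_i)$, with $0 \le m \le \min(n,k)$, and using that the blue data and the red data are chosen independently.

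First I would count the blue data for a fixed $m$. It is a partition of the $(k+1)$-element set $K$ into the block $B^*$ containing $\k{*}$ together with an ordered list $B_1, \dots, B_m$ of non-empty blocks. Since an unordered partition of $K$ into $m+1$ non-empty blocks automatically singles out the block of $\k{*}$, and there are $m!$ ways to order the remaining $m$ blocks, the number of choices of blue data is $m!\sts{k+1}{m+1}$.

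Next I would count the red data for the same fixed $m$, weighted by $x^{w_{\mathcal{AC}}(C)} = x^{|\overline{R}|}$. The red data is a partition of the $(n+1)$-element set $N$ into the block $R^*$ containing $\p{*}$, an ordered list $R_1, \dots, R_m$ of non-empty blocks disjoint from $\p{*}$, and a (possibly empty) block $\overline{R}$. Pulling out the $m!$ orderings of the ordinary blocks, it remains to see that the weighted number of ways to partition $\{\p{1}, \dots, \p{n}\}$ into $m$ unordered non-empty blocks together with two distinguishable, possibly empty blocks $R^*\setminus\{\p{*}\}$ and $\overline{R}$, with weight $x^{|\overline{R}|}$, equals $\sts{n}{m}_{x+1}$. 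For $x = r \in \Z_{\geq 0}$ this is transparent: replacing the weight $x^{|\overline{R}|}$ by an $r$-colouring of the elements of $\overline{R}$ splits $\overline{R}$ into $r$ labeled possibly-empty blocks, which together with the labeled possibly-empty block $R^*\setminus\{\p{*}\}$ make $r+1$ labeled possibly-empty blocks, and the resulting configuration — $\{\p{1},\dots,\p{n}\}$ partitioned into $m$ unordered non-empty blocks and $r+1$ labeled possibly-empty blocks — is exactly the combinatorial model for $\sts{n}{m}_{r+1}$. Hence the weighted red count is $m!\sts{n}{m}_{x+1}$. Multiplying the blue count $m!\sts{k+1}{m+1}$ by the red count $m!\sts{n}{m}_{x+1}$ and summing over $m$ yields the asserted formula; the range truncates at $\min(n,k)$ because $\sts{k+1}{m+1} = 0$ for $m > k$ and $\sts{n}{m}_{x+1} = 0$ for $m > n$.

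The only step needing genuine care is this red-side identification: one must recognise that the part $R^*\setminus\{\p{*}\}$ of the block of $\p{*}$ behaves as one additional labeled, possibly-empty box alongside the weighted special block $\overline{R}$, which is precisely what turns $\sts{n}{m}_{x}$ into $\sts{n}{m}_{x+1}$, and one must keep $R^*$ and $\overline{R}$ distinguishable to avoid double counting. Everything else is bookkeeping; the case $m = 0$, where the identity collapses to $(1+x)^n = (x+1)^n$, is a convenient sanity check.
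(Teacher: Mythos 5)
Your proof is correct. It shares the paper's overall framework---working in the abundant Callan sequence model, refining by the number $m$ of ordinary pairs, and counting the blue data as $m!\sts{k+1}{m+1}$---but it handles the red side by a genuinely different mechanism. The paper adjoins an auxiliary element $\p{\dagger}$ to mark the special block, splits into two cases according to whether the extra block $R^*$ contains red elements other than $\p{*}$, obtains the count $(m+1)!\sts{n}{m+1}_x + m!\sts{n}{m}_x$, and then invokes the recursion $\sts{n}{m}_{x+1} = (m+1)\sts{n}{m+1}_x + \sts{n}{m}_x$ from \cref{rec-stirling-sec} to assemble $\sts{n}{m}_{x+1}$. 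You instead identify the weighted red count with $\sts{n}{m}_{x+1}$ in a single step: specialize to $x = r \in \Z_{\geq 0}$, read the weight $r^{|\overline{R}|}$ as an $r$-colouring that splits $\overline{R}$ into $r$ labelled possibly-empty boxes sitting alongside the labelled possibly-empty box $R^* \setminus \{\p{*}\}$, match the resulting configuration against the $(r+1)$-Stirling model recalled in \cref{subsec: extended_Callan}, and extend to formal $x$ by polynomiality. Your route avoids both the case split and the recursion lemma, at the cost of the specialize-and-extend step; the paper's route stays entirely within the formal-weight setting, which is the stated purpose of the abundant (as opposed to extended) Callan model. Both arguments are complete, and you correctly flag the one delicate point, namely that $R^*$ and $\overline{R}$ must be kept distinguishable so that the extra box contributed by $R^*\setminus\{\p{*}\}$ is what turns the parameter $x$ into $x+1$.
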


\begin{proof}
	Let $N = \{\p{1}, \ldots, \p{n}\} \cup \{\p{*}\} \cup \{\p{\dagger}\}$  (we added $\p{\dagger}$ to mark the special block $S$) and $K = \{\k{1}, \ldots, \k{k}\} \cup \{\k{*}\}$. 
	Take an ordered partition of $K$ into $m+1$ non-empty blocks $B_1, \dots, B_m, B^*$. This can be done in $m! \sts{k+1}{m+1}$ ways. 
	We consider two cases separately for partitions of $N$. 
	
	First, we partition the set $N \setminus\{\p{*}\}$ into $m+1$ non-empty blocks. 
	By pairing the blocks that do not contain $\p{\dagger}$ with the $m$ ordinary blocks of \blue elements, and by deleting $\p{\dagger}$, we obtain an abundant Callan sequence such that $R^*$ contains only $\p{*}$. 
	Then the number of such partitions of $N$ with weight is $m! \sts{n}{m}_x$.
	
	Second, we partition the set $N\setminus\{\p{*}\}$ into $m+2$ non-empty blocks. 
	The block containing $\p{\dagger}$ becomes the special block $S$ after deleting $\p{\dagger}$. 
	We pair up the remaining $m+1$ blocks of \red elements with the $m+1$ blocks of \blue elements to obtain an abundant Callan sequence with $|R^*| > 1$. The number of such partitions of $N$ is $(m+1)! \sts{n}{m+1}_x$.

    Hence, 
	\[
	\sum_{C \in \mathcal{AC}(n,k)} x^{w_{\mathcal{AC}}(C)} = \sum_{m=0}^{\min(n,k)} m! \sts{k+1}{m+1} \left((m+1)! \sts{n}{m+1}_x + m! \sts{n}{m}_x \right).
	\]
	The theorem follows now by \cref{rec-stirling-sec}. 
\end{proof}

We define Stirling polynomials of the first kind, $\stf{n}{m}_x$, for arbitrary $x$ in a similar manner as Stirling polynomials of the second kind. 
It is well known that the Stirling number of the first kind $\stf{n}{m}$ is the number of permutations of $n$ elements with $m$ cycles. 
It is also known that the permutations with $m$ cycles are in bijections with permutations with $m$ left-to-right minima. 
(For a permutation $p=p_1p_2\cdots p_n$, the element $p_i$ is called a \emph{left-to-right minimum} of $p$ if $p_i<p_j$ for all $j<i$).

For non-negative integers $n$ and $m$, let $\mathcal{S}_1(n,m)$  denote the set of permutations of $\{0,1,2,\ldots, n\}$ with $m+1$ cycles. 
We record the cycles starting with the smallest element in the cycle, and the cycles are listed according to their smallest element in increasing order.

We define the weight function $w_{\mathcal{S}_1}: \mathcal{S}_1(n,m) \to \Z_{\geq 0}$ by the number of the left-to-right minima in the sequence defined by removing $0$ from the first cycle. 
 
\begin{example}\label{Example: Stirling-poly}
	We identify a permutation $p = p_0 p_1 p_2 \cdots p_n$ of $\{0, 1, 2, \dots, n\}$ with a bijection $p: \{0,1,\dots,n\} \to \{0,1, \dots, n\} ; i \mapsto p_i$ as usual. 
	For example, a permutation $p = 4960537281$ of $\{0, 1, \dots, 9\}$ is identified with the product of $4$ cycles $(0, 4, 5, 3) (1, 9) (2, 6, 7) (8)$. 
	The order of cycles is determined by their smallest elements, namely, $0, 1, 2$, and $8$. The sequence defined by removing $0$ from the first cycle is $453$. 
	There are $2$ left-to-right minima in $453$, namely, $4$ and $3$. Thus we have $w_{\mathcal{S}_2}(p) = 2$.
\end{example}

\begin{definition}\label{Stirling-poly-1st-def} 
For non-negative integers $n$ and $m$ with $m\leq n$, we define the \emph{Stirling polynomial of the first kind} as
	\[
	\stf{n}{m}_x = \sum_{p \in \mathcal{S}_1(n,m)} x^{w_{\mathcal{S}_1}(p)}.
	\]
\end{definition}

\begin{example}
	There are $11$ elements in $\mathcal{S}_1(3,1)$, namely,
	\begin{align*}
		&(0) (1,2,3) & &(0) (1,3,2)\\
		&(0,\underline{1})(2,3) & &(0,\underline{2})(1,3) & &(0,\underline{3})(1,2) & &(0,\underline{2},3)(1) & &(0,\underline{1},3)(2) & &(0,\underline{1},2)(3)\\
		&(0,\underline{3},\underline{2})(1) & &(0,\underline{3},\underline{1})(2) & &(0,\underline{2},\underline{1})(3).
	\end{align*}
	Thus, we have $\stf{3}{1}_x = 3x^2 + 6x + 2$.
\end{example}

\begin{proposition}\label{prop:stirling first}
	For $n \geq 0$, it holds
	\[
	(x+y)_n := (x+y)(x+y+1) \cdots (x+y+n-1) = \sum_{m=0}^n \stf{n}{m}_y x^m.
	\]
	The Stirling polynomials of the first kind satisfy the recursion
	\[
	\stf{n+1}{m}_x = \stf{n}{m-1}_x + (x+n) \stf{n}{m}_x
	\]
	with initial values $\stf{0}{0}_x = 1, \stf{n}{0}_x = (x)_n = x(x+1) \cdots (x+n-1)$, and $\stf{0}{m}_x = 0$.
\end{proposition}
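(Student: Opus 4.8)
The plan is to verify the three assertions in turn: the initial values straight from the definition, then the recursion by an insertion/deletion argument on the largest element, and finally the product formula by induction on $n$.

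First I would dispose of the initial values. The set $\mathcal{S}_1(0,0)$ has the unique element $(0)$, whose word $p_1^{(0)}\cdots p_{r_0}^{(0)}$ is empty, so $\stf{0}{0}_x = 1$; and $\mathcal{S}_1(0,m)=\emptyset$ for $m\ge 1$, since $\{0\}$ admits no partition into $m+1\ge 2$ cycles, so $\stf{0}{m}_x = 0$. For $\stf{n}{0}_x$, an element of $\mathcal{S}_1(n,0)$ is a single cycle $(0,p_1,\dots,p_n)$, the word $p_1\cdots p_n$ ranges over all of $S_n$, and its weight is the number of left-to-right minima; since the enumerator of permutations of $\{1,\dots,n\}$ by number of left-to-right minima is the classical $x(x+1)\cdots(x+n-1)$, this gives $\stf{n}{0}_x = (x)_n$. (Alternatively, once the recursion is available, $\stf{n+1}{0}_x = (x+n)\stf{n}{0}_x$ together with $\stf{0}{0}_x=1$ forces the same answer.)

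Next, the recursion $\stf{n+1}{m}_x = \stf{n}{m-1}_x + (x+n)\stf{n}{m}_x$ (with the convention $\stf{n}{-1}_x = 0$). Take $p\in\mathcal{S}_1(n+1,m)$ and delete the element $n+1$. Since $n+1$ is maximal, in canonical notation it is either a singleton cycle of its own --- necessarily the last one --- or it sits immediately after some element of $\{0,\dots,n\}$; in either case deletion preserves the canonical order of cycles and the number of cycles (which drops by one in the singleton case). In the singleton case we land in $\mathcal{S}_1(n,m-1)$ with the zero-cycle word, hence the weight, unchanged; this yields the term $\stf{n}{m-1}_x$. Otherwise we land in some $p'\in\mathcal{S}_1(n,m)$, and conversely $n+1$ can be reinserted immediately after any of the $n+1$ elements of $\{0,\dots,n\}$: inserting after $0$ prepends $n+1$ to the zero-cycle word, where, being larger than everything, it becomes a new left-to-right minimum and shadows nothing, so the weight rises by exactly $1$; inserting after any of the other $n$ positions leaves the set of left-to-right minima of the zero-cycle word untouched (a newly inserted $n+1$ with a predecessor is never itself a minimum, and it is too large to affect later ones), so the weight is unchanged. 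Hence each $p'$ of weight $w$ contributes $x\cdot x^{w} + n\cdot x^{w} = (x+n)x^{w}$, summing to $(x+n)\stf{n}{m}_x$, and adding the two cases proves the recursion. The one delicate point is the left-to-right-minimum bookkeeping under insertion/deletion of the maximal element; everything else is mechanical, so that is where I would be most careful.

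Finally, the product formula follows by induction on $n$: for $n=0$ both sides equal $1$, and the inductive step reads
\[
\sum_{m}\stf{n+1}{m}_y\,x^m \;=\; \sum_{m}\bigl(\stf{n}{m-1}_y+(y+n)\stf{n}{m}_y\bigr)x^m \;=\; x\,(x+y)_n + (y+n)\,(x+y)_n \;=\; (x+y)_{n+1},
\]
using the recursion in the weight variable $y$ and the induction hypothesis. As a cross-check, the same identity also drops out of a single insertion process: build a permutation of $\{0,\dots,n\}$ by inserting $1,2,\dots,n$ into $(0)$, where at step $i$ one either opens a new singleton cycle (one way, one more non-zero cycle, recorded by $x$), inserts $i$ immediately after $0$ (one way, weight $+1$, recorded by $y$), or inserts $i$ after one of the other $i-1$ slots of the linear cycle word (weight- and cycle-preserving), for a total factor $x+y+(i-1)$ at step $i$; the product $\prod_{i=1}^{n}(x+y+i-1)=(x+y)_n$, collected by the number of non-zero cycles, is exactly $\sum_{m}\stf{n}{m}_y\,x^m$.
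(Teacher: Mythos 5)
Your proof is correct; the paper offers no argument at all (it simply declares the proposition to ``follow immediately''), and your deletion/insertion analysis of the maximal element $n+1$, together with the induction on $n$ for the product formula, is exactly the standard justification that declaration presupposes. The one delicate point you flag --- the bookkeeping of left-to-right minima of the zero-cycle word when $n+1$ is inserted after $0$ versus elsewhere --- is handled correctly, and your closing single-pass insertion argument even gives a direct bijective proof of $(x+y)_n=\sum_m \stf{n}{m}_y x^m$ that bypasses the induction.
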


\begin{proof}
	We define a map $f: \mathcal{S}_1(n+1,m) \to \mathcal{S}_1(n,m)$ by deleting $n+1$. For each $p \in \mathcal{S}_1(n+1,m)$, the following three cases occur.
	\begin{enumerate}
		\item If $p_0 = n+1$, that is, the first cycle has of the form $(0, n+1, \dots )$, then the number of cycles is preserved and the weight is decreased by one.
		\item If $p_{n+1} = n+1$, that is, $(n+1)$ is a cycle of $p$, then the number of cycles is decreased by one and the weight is preserved.
		\item If otherwise, then the number of cycles and the weight are preserved.
	\end{enumerate}
	Therefore, we have
	\[
		\stf{n+1}{m}_x = x \stf{n}{m}_x + \stf{n}{m-1}_x + n \stf{n}{m}_x.
	\]
	The recursion is translated into the generating function
	\[
		(x+y)(x+y+1) \cdots (x+y+n-1) = \sum_{m=0}^n \stf{n}{m}_y x^m,
	\]
	which concludes the proof.
\end{proof}

The motivation of introducing the Stirling polynomials of the first kind as above was to provide a combinatorial proof for the following theorem.

\begin{theorem}\label{theorem:Ber-hypersum}
	For $n,k \geq 0$, it holds
	\begin{align}\label{form: cycles_pB}
		\sum_{j=0}^n \stf{n}{j}_y B_j^{(-k)}(x) = \sum_{\ell=0}^n {n \choose \ell} (x+y)_{n-\ell} \ell! (\ell+1)^k.
	\end{align}
\end{theorem}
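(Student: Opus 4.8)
The plan is to give a bijective/double-counting argument by building a single combinatorial family and enumerating it two ways. The left-hand side $\sum_j \stf{n}{j}_y B_j^{(-k)}(x)$ suggests the following construction. Start with the set $\{0, 1, \dots, n\}$ and form a permutation consisting of $j+1$ cycles in canonical notation as in \cref{Stirling-poly-1st-def}, weighting by $y$ raised to the number of left-to-right minima among the non-zero entries of the cycle through $0$; this contributes $\stf{n}{j}_y$. The remaining $j$ ordinary cycles carry labels on which we overlay a Callan-type structure of size $j \times k$ together with an abundant special block weighted by $x$, contributing $B_j^{(-k)}(x)$ via the abundant Callan model of \cref{subsec: abundant Callan}. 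Concretely: the $j$ elements $p_0^{(1)} < \cdots$ indexing the ordinary cycles play the role of the $j$ red points $\{\p{1},\dots,\p{j}\}$ in an abundant Callan sequence of size $j\times k$, whose weight variable is $x$.

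\textbf{Right-hand side count.} To recover the right side, I would reorganize the construction by first choosing which of the $n$ non-zero elements end up ``ordinary'' (i.e.\ not in the distinguished cycle through $0$ and not in the abundant special block $\overline{R}$). Suppose $\ell$ elements are ordinary in this sense; they get arranged into the ordinary Callan pairs $(B_1,R_1)\cdots(B_\ell,R_\ell)$ with the $k$ blue elements distributed over them — this is exactly a (reduced) Callan count, giving $\ell!\,(\ell+1)^k$ once one accounts for the linear ordering of the $\ell$ pairs and the placement of $\{\k1,\dots,\k k\}\cup\{\k*\}$, matching the classical formula $B_\ell^{(-k)}(0)=\sum \cdots$ but here in its ``ordered pairs'' raw form $\ell!(\ell+1)^k$ summed appropriately; and the factor $\binom{n}{\ell}$ chooses the ordinary elements. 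The remaining $n-\ell$ elements must be slotted either into the cycle through $0$ (contributing a left-to-right-minimum weight of $y$) or into the abundant special block $\overline{R}$ (contributing weight $x$), and the number of ways to do this, tracked with the two weights, is precisely $(x+y)_{n-\ell}$ by \cref{prop:stirling first}: inserting elements one at a time into a structure that is ``a cycle rooted at $0$ plus a set,'' where each insertion either lands in the set (weight $x$, one new slot each time... more carefully a falling/rising product) or extends the cycle, yields the rising factorial $(x+y)_{n-\ell}=\sum_m \stf{n-\ell}{m}_y x^m$.

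\textbf{Carrying it out.} So the scheme is: (1) fix the target object as ``a canonical permutation of $\{0,\dots,n\}$ into cycles, with the non-$0$ cycles decorated by an abundant Callan structure of the appropriate size over $\{\k1,\dots,\k k,\k*\}$, weighted by $y^{\#\text{LR-min in }0\text{-cycle}}\cdot x^{|\overline{R}|}$''; (2) sum over $j$ = number of non-$0$ cycles and factor as $\stf{n}{j}_y\cdot B_j^{(-k)}(x)$ using the abundant Callan theorem — this is the LHS; (3) re-sum over $\ell$ = number of non-$0$ cycles whose minimum is \emph{not} one of the $n-\ell$ ``free'' elements, i.e.\ partition the $n$ labels by whether they are ordinary-cycle-minima-or-below vs.\ inserted afterward, getting $\binom{n}{\ell}\cdot(\text{ordered Callan on }\ell)\cdot(x+y)_{n-\ell}$ — this is the RHS. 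The main obstacle I anticipate is step (3): making the insertion argument for the $n-\ell$ remaining elements rigorous, i.e.\ checking that at each insertion there are exactly ($x$ + current size of the $0$-cycle portion) choices split correctly between ``goes into $\overline{R}$'' (the $x$ part) and ``inserted into the rooted $0$-cycle just after some element, contributing to LR-minima count appropriately'' (the rising-factorial-in-$y$ part), so that the weighted total telescopes to $(x+y)_{n-\ell}$. Once that insertion lemma is pinned down — essentially a weighted version of the standard rooted-cycle/rising-factorial correspondence already encoded in \cref{prop:stirling first} — the identity follows by equating the two counts.
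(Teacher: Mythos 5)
Your proposal follows essentially the same double-counting argument as the paper: both enumerate pairs consisting of a cycle decomposition of $\{0,1,\dots,n\}$ and an abundant Callan structure overlaid on the non-zero cycles, weighted by $x^{|\overline{R}|}y^{w_{\mathcal{S}_1}}$, obtaining the left-hand side by summing over the number of cycles and the right-hand side by first selecting the $\ell$ elements that land in ordinary pairs (giving $\binom{n}{\ell}\ell!(\ell+1)^k$) and letting the remaining $n-\ell$ elements form the $0$-cycle and the cycles indexed by $\overline{R}$. The ``insertion lemma'' you flag as the main obstacle is precisely the identity $(x+y)_{n-\ell}=\sum_m \stf{n-\ell}{m}_y x^m$ of \cref{prop:stirling first}, which the paper also invokes, so there is no gap.
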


Before proving the theorem, we define a special combinatorial object. 
\begin{definition}
Let $\mathcal{CS}(n,k)$ denote the set of permutations of the set $\{0, 1, \dots, n\} \cup \{\k{1}, \dots, \k{k}\} \cup \{|\}$ satisfying the following two conditions. 
\begin{itemize}
	\item All substrings of \blue elements are in increasing order.
	\item All \blue elements are located before the bar, and $0$ is after the bar.
\end{itemize}
\end{definition}

We construct a bijection $F: \bigcup_{j=0}^n (\mathcal{AC}(j,k) \times \mathcal{S}_1(n,j)) \to \mathcal{CS}(n,k)$ by following the steps below.

\begin{enumerate}
	\item For an abundant Callan sequence $(B_1; R_1) \cdots (B_m; R_m) \cup (B, \k{*}; R, \p{*}) \cup S \in \mathcal{AC}(j,k)$, we deform it to $R B_1 R_1 \cdots B_m R_m B \p{*} S$.
	\item Replace $\p{i}$ in the above sequence by the $i$-th cycle $c_i$ of the permutation $p = c_0 c_1 \cdots c_j \in \mathcal{S}_1(n,j)$, and replace $\p{*}$ by $``| c_0"$ the bar and the $0$-th cycle containing $0$.
	\item Identify every product of cycles with a permutation of their entries. 
\end{enumerate}

\begin{example}	
	As an example, take an abundant Callan sequence 
	\[
		C = (\k{1}, \k{3}, \k{4}; \p{1}) (\k{6}, \k{7}; \p{2}, \p{3}) \cup (\k{2}, \k{5}, \k{*}; \p{5}, \p{*}) \cup (\p{4}) \in \mathcal{AC}(5,7)
	\]
	and a permutation (product of cycles) $p = 8371546209 = (0,8)(1,3)(2,7)(4,5)(6)(9) \in \mathcal{S}_1(9,5)$. We obtain a permutation in $\mathcal{CS}(9,7)$ by the above procedure 
	\begin{align*}
		C &\leftrightarrow \p{5} \k{1}\k{3}\k{4} \p{1} \k{6}\k{7} \p{2}\p{3} \k{2}\k{5} \p{*} \p{4}\\
		&\leftrightarrow (9) \k{1}\k{3}\k{4} (1,3) \k{6}\k{7} (2,7) (4,5) \k{2}\k{5} \mid (0,8) (6)\\
		&\leftrightarrow 9 \k{1}\k{3}\k{4} 31 \k{6}\k{7} 7542 \k{2}\k{5} \mid 860.
	\end{align*}
\end{example}

\begin{proof}[Proof of \cref{theorem:Ber-hypersum}]
We define two weight functions $w_{\mathcal{CS},1}, w_{\mathcal{CS},2}: \mathcal{CS}(n,k) \to \mathbb{Z}_{\geq 0}$ by $w_{\mathcal{CS},1}(F(C,p)) = w_\mathcal{AC}(C)$ and $w_{\mathcal{CS},2}(F(C,p)) = w_{\mathcal{S}_1}(p)$. 
We now consider the polynomial with two variables defined by
\[
	\mathcal{CS}_n^k(x,y) = \sum_{s \in \mathcal{CS}(n,k)} x^{w_{\mathcal{CS},1}(s)} y^{w_{\mathcal{CS},2}(s)}.
\]
By the definition of the weights, we have
\begin{align*}
	\mathcal{CS}_n^k(x,y) &= \sum_{j=0}^n \sum_{C \in \mathcal{AC}(j,k)} x^{w_\mathcal{AC}(C)} \sum_{p \in \mathcal{S}_1(n,j)} y^{w_{\mathcal{S}_1}(p)}\\
	&= \sum_{j=0}^n B_j^{(-k)}(x) \times \stf{n}{j}_y.
\end{align*}

On the other hand, elements $s \in \mathcal{CS}(n,k)$ can be counted as follows.

First, we divide the set $\{0, 1, \dots, n\}$ into two sets. One of the sets contains $\ell$ elements without the special element $0$, so we have $\binom{n}{\ell}$ options. 
Then, we permute the $\ell$ chosen elements in $\ell !$ ways and insert $k$ \blue elements between (before and after) these $\ell$ \red elements in $(\ell + 1)^k$ ways. 
The \blue elements in each substring are ordered increasingly.

Second, the permutation of the remaining $n-\ell+1$ elements including $0$ form $m+1$ cycles ($0 \leq m \leq n-\ell$). 
Recall that the number of cycles, $m$, (without the cycle containing $0$) equals $w_{\mathcal{CS},1}(s)$. Thus,
\begin{align*}
	\sum_{s \in \mathcal{CS}(n,k)} x^{w_{\mathcal{CS},1}(s)} y^{w_{\mathcal{CS},2}(s)} &= \sum_{\ell=0}^n {n \choose \ell} \ell! (\ell+1)^k \sum_{m=0}^{n-\ell} x^m \sum_{p \in \mathcal{S}_1(n-\ell, m)} y^{w_{\mathcal{S}_1}(p)}\\
	&= \sum_{\ell=0}^n {n \choose \ell} \ell! (\ell+1)^k \sum_{m=0}^{n-\ell} x^m \stf{n-\ell}{m}_y.
\end{align*}
Since the inner sum equals $(x+y)_{n-\ell}$ by \cref{prop:stirling first}, the assertion holds.
\end{proof}

Starting from the famous sum of powers of integers
\begin{align*}
	S_k^{(0)}(n) = S_k(n) = 1^k+2^k+\cdots +n^k,
\end{align*}
Faulhaber (see Knuth~\cite{Knuth1993}) introduced recursively \emph{hypersums} as

\begin{align*}
	S_k^{(r)}(n) = \sum_{j=1}^{n} S_k^{(r-1)}(j).
\end{align*}

Karg$\i$n--Cenkci--Dil--Can~\cite{KarginCenkciDilCan2020} presented an identity connecting poly-Bernoulli polynomials and hypersums \cite[Theorem 1]{KarginCenkciDilCan2020}. 
\cref{theorem:Ber-hypersum} implies this result. 

\begin{corollary}\cite[Theorem 1]{KarginCenkciDilCan2020}
For non-negative integers $n$, $k$, $m$, and $r$ with $m+r > 0$, we have
\[
\sum_{j=0}^n \stf{n}{j}_r B_j^{(-k)}(m) = n! S_k^{(m+r-1)}(n+1).
\]	
\end{corollary}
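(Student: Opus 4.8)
The plan is to deduce the corollary from \cref{theorem:Ber-hypersum} by specializing $x = m$ and $y = r$, and then identifying the right-hand side $\sum_{\ell=0}^n \binom{n}{\ell}(m+r)_{n-\ell}\,\ell!\,(\ell+1)^k$ with $n!\,S_k^{(m+r-1)}(n+1)$. So the entire content is a closed-form evaluation of Faulhaber's hypersums. First I would set $s := m+r \geq 1$ (the hypothesis $m+r>0$ guarantees this is a positive integer, so the Pochhammer and binomial manipulations stay combinatorially meaningful) and rewrite the target as
\[
n!\,S_k^{(s-1)}(n+1) = \sum_{\ell=0}^n \binom{n}{\ell}(s)_{n-\ell}\,\ell!\,(\ell+1)^k.
\]
Dividing by $n!$, the claim becomes $S_k^{(s-1)}(n+1) = \sum_{\ell=0}^n \binom{s+n-\ell-1}{n-\ell}\binom{\ell+1}{1}^{?}\cdots$ — more cleanly, $(s)_{n-\ell}/(n-\ell)! = \binom{s+n-\ell-1}{n-\ell}$, so
\[
S_k^{(s-1)}(n+1) = \sum_{\ell=0}^n \binom{s+n-\ell-1}{n-\ell}\,(\ell+1)^k.
\]

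The key step is therefore to prove this last identity, which is a standard fact about Faulhaber hypersums: iterating the operator $f \mapsto \sum_{j=1}^{N} f(j)$ starting from $f(j)=j^k$ produces a sum of $j^k$ against binomial coefficients counting the number of ways the summation index can ``flow up'' through $s-1$ layers. Concretely, I would prove by induction on $r' \geq 0$ that
\[
S_k^{(r')}(N) = \sum_{j=1}^{N} \binom{N-j+r'}{r'}\,j^k,
\]
the base case $r'=0$ being the definition $S_k(N) = \sum_{j=1}^N j^k$, and the inductive step following from the hockey-stick identity $\sum_{N'=j}^{N}\binom{N'-j+r'}{r'} = \binom{N-j+r'+1}{r'+1}$ applied after swapping the order of summation in $\sum_{N'=1}^{N} S_k^{(r')}(N')$. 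Then setting $r' = s-1$ and $N = n+1$, and substituting the summation variable $j = \ell+1$ (so $\ell$ runs from $0$ to $n$ and $N - j + r' = n+1 - (\ell+1) + (s-1) = s + n - \ell - 1$), yields exactly the required identity. Combining with $\binom{s+n-\ell-1}{n-\ell} = (s)_{n-\ell}/(n-\ell)!$ and multiplying back by $n!$ closes the argument.

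The main obstacle is essentially bookkeeping rather than conceptual: one must be careful that the hypothesis $m+r>0$ is exactly what allows the identification (when $s=0$ the Pochhammer $(0)_{n-\ell}$ vanishes except at $\ell=n$ and there is no hypersum of negative order, so the restriction is genuine), and one must match the index shift $j = \ell+1$ and the upper limit $n+1$ correctly so that the $(\ell+1)^k$ on the right of \cref{form: cycles_pB} lines up with $j^k$ in $S_k(n+1)$. If one prefers to avoid the explicit binomial formula for hypersums, an alternative is a direct double induction on $n$ and on $m+r$ using $S_k^{(r')}(N) - S_k^{(r')}(N-1) = S_k^{(r'-1)}(N)$ together with the recursion for $\stf{n}{j}_y$ from \cref{prop:stirling first}; but the hockey-stick route above is shorter, so that is the one I would write out.
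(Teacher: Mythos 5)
Your proof is correct and follows essentially the same route as the paper: specialize \cref{theorem:Ber-hypersum} at $x=m$, $y=r$ and identify the resulting right-hand side with $n!\,S_k^{(m+r-1)}(n+1)$ via the binomial expression $S_k^{(r')}(N)=\sum_{j=1}^{N}\binom{N-j+r'}{r'}j^k$ for the hypersums. The only difference is that the paper cites an external reference for that hypersum formula, whereas you prove it yourself by induction using the hockey-stick identity, which makes your version self-contained.
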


\begin{proof}
	For $x = m, y = r$, the right-hand side of \eqref{form: cycles_pB} becomes
	\[
	n! \sum_{\ell=1}^{n+1} {m+r+n-\ell \choose m+r-1} \ell^k.
	\]
	By~\cite[Corollary 2]{LaissaouiBounebiratRahmani2017}, this equals $n! S_k^{(m+r-1)}(n+1)$.
\end{proof}

\begin{corollary} \cite[Corollary 24]{BenyiMatsusaka2021}
For $m=0$ and $r=1$, we have
\[
\sum_{j=0}^n \stf{n+1}{j+1} B_j^{(-k)} = n! S_k(n+1).
\]
\end{corollary}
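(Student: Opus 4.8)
The plan is to derive this corollary directly from the preceding one by specializing the parameters. In the previous corollary we have the identity
\[
\sum_{j=0}^n \stf{n}{j}_r B_j^{(-k)}(m) = n! \, S_k^{(m+r-1)}(n+1)
\]
valid for all non-negative integers $n,k,m,r$ with $m+r>0$. First I would set $m=0$ and $r=1$, which satisfies $m+r=1>0$, so the right-hand side becomes $n! \, S_k^{(0)}(n+1) = n! \, S_k(n+1)$ since $S_k^{(0)} = S_k$ is the ordinary power sum by definition. The left-hand side becomes $\sum_{j=0}^n \stf{n}{j}_1 B_j^{(-k)}(0) = \sum_{j=0}^n \stf{n}{j}_1 B_j^{(-k)}$, using the convention $B_j^{(-k)} = B_j^{(-k)}(0)$.

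The one genuine point to check is the translation of the Stirling polynomial indices, namely that $\stf{n}{j}_1 = \stf{n+1}{j+1}$, i.e. that the Stirling polynomial of the first kind evaluated at $x=1$ equals an ordinary (unsigned) Stirling number of the first kind with shifted arguments. This is most easily seen from \cref{prop:stirling first}: the generating identity $(x+y)_n = \sum_{m=0}^n \stf{n}{m}_y x^m$ at $y=1$ gives $\sum_{m=0}^n \stf{n}{m}_1 x^m = (x+1)_n = (x+1)(x+2)\cdots(x+n)$. On the other hand, the rising factorial $(x)_{n+1} = x(x+1)\cdots(x+n) = x\cdot(x+1)_n$ expands as $\sum_{i=0}^{n+1} \stf{n+1}{i} x^i$ by the standard generating function for unsigned Stirling numbers of the first kind (equivalently, the $y=0$ case of \cref{prop:stirling first} together with $\stf{n}{m} = \stf{n}{m}_0$). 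Dividing by $x$ and comparing coefficients of $x^m$ yields $\stf{n}{m}_1 = \stf{n+1}{m+1}$ for all $m$, which is exactly the index shift needed. Substituting $j$ for $m$ gives the stated left-hand side.

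There is no real obstacle here: the result is a clean specialization, and the only substantive ingredient is the combinatorial/algebraic identity $\stf{n}{j}_1 = \stf{n+1}{j+1}$, which one can alternatively verify directly from the combinatorial definition (a permutation of $\{0,1,\dots,n\}$ with $m+1$ cycles and weight-one contribution from a single extra left-to-right minimum in the cycle of $0$ corresponds bijectively to a permutation of $\{0,1,\dots,n\}$ with $m+2$ cycles after splitting off that minimum, matching $\mathcal{S}_1$ data for $\stf{n+1}{m+1}$). Assembling these pieces completes the proof in a couple of lines.
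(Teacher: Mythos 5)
Your proof is correct and follows exactly the route the paper intends: the paper states this corollary without proof as an immediate specialization of the preceding one at $m=0$, $r=1$, and your only substantive addition is the index-shift identity $\stf{n}{j}_1 = \stf{n+1}{j+1}$, which you verify correctly (and which also follows at once from the combinatorial definition, since at $x=1$ every element of $\mathcal{S}_1(n,j)$ — a permutation of the $n+1$ symbols $\{0,1,\dots,n\}$ into $j+1$ cycles — contributes weight $1$).
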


%-------------------------------------------------------------------
\section{Poly-Euler numbers}\label{section:poly-Euler} \label{s3}
%-------------------------------------------------------------------

Euler numbers count the alternating permutations of a set with an even number of elements. 
They arise in the Taylor series expansion of the secant and hyperbolic secant function,
\begin{align*}
	 \sum_{n=0}^{\infty}E_n\frac{t^n}{n!}=\frac{1}{\cosh t}.
\end{align*}
The odd indexed Euler numbers are zero, the even indexed numbers are 
$1$, $-1$, $5$, $-61$, $1385$, $\ldots $  A028296 in OEIS~\cite{OEIS}.
Ohno--Sasaki~\cite{OhnoSasaki2012, OhnoSasaki2017} introduced poly-Euler numbers 
\begin{align*}
	\sum_{n=0}^{\infty} E^{(k)}_n \frac{t^n}{n!} = \frac{\li_k(1-e^{-4t})}{4t \cosh t}
\end{align*}
using polylogarithm function in a similar manner as poly-Bernoulli numbers. Due to Sasaki~\cite{Sasaki2012}, poly-Euler numbers are introduced as special values of a certain $L$-function. 
The number $E^{(1)}_n$ is nothing but the original Euler number $E_n$. 

Komatsu and Zhu \cite{KomatsuZhu2020} introduced \emph{complementary Euler numbers}, $\widehat{E}_n$, as a special case of \emph{hypergeometric Euler numbers} (see also \cite{Komatsu2017}) by the generating function
\begin{align*}
	\sum_{n=0}^{\infty} \widehat{E}_n\frac{t^n}{n!} = \frac{t}{\sinh t}.
\end{align*}
Note that Koumandos and Laurberg Pedersen~\cite{KoumandosLaurbergPedersen2012} called this sequence as \emph{weighted Bernoulli numbers}. 

As a generalization of the complementary Euler numbers, Komatsu~\cite{Komatsu2017, Komatsu2020} defined and studied \emph{poly-Euler numbers of the second kind}, $\widehat{E}_n^{(k)}$ by the generating function
\begin{align*}
	\sum_{n=0}^{\infty} \widehat{E}_n^{(k)} \frac{t^n}{n!} =\frac{\li_k(1-e^{-4t})}{4\sinh t}.
\end{align*}

The arrays $\widetilde{E}_n^{(-k)}=nE^{(-k)}_{n-1}$ and $\widehat{E}_n^{(-k)}$ are non-negative integers. \cref{table:poly_Euler} shows the first few values of these arrays.

 \begin{table}[H]
 	\begin{tabular}{|c|cccc|}
 		\hline
 		$n,k$ &0&1&2&3\\\hline
 		0&0&0&0&0\\
 		1& 1&1&1&1\\
 		2&4&12&28&60\\
 		3&13&109&493&1837\\
 		4&40&888&7192& 42840\\\hline
 		
 	\end{tabular}\hspace{1cm}
 	\begin{tabular}{|c|cccc|}
 	\hline
 	$n,k$ &0&1&2&3\\\hline 
 	0& 1&1&1&1\\
 	1&2&6&14&30\\
 	2&5&37&165&613\\
 	3&14&234&1826& 10770  \\
 	4 & 41 & 1513& 19689& 175465 \\\hline
 \end{tabular}
 	\caption{The poly-Euler numbers $\widetilde{E}_n^{(-k)}$ and $\widehat{E}_n^{(-k)}$.}
 	\label{table:poly_Euler}
 \end{table}

In this section, we extend the notion of abundant Callan sequences and define \emph{$E$-sequences}. 
As we will see $E$-sequences are appropriate for a combinatorial study of poly-Euler numbers. 
 
\begin{definition}
An $(n,k)$-\emph{$E$-sequence} is a sequence that we obtain from an abundant Callan sequence of size $n\times k$ by associating to each \red element in ordinary blocks and in the extra block a sign, $+$ or $-$, and separating the ordinary blocks of \red elements into two parts, left and right part. 
Further, we call an $(n,k)$-$E$-sequence \emph{odd} if the size of $S$ is odd, \emph{even} if $|S|$ is even.
\end{definition}

\begin{example}\label{example_first}
An odd $(12,9)$-$E$-sequence is built from the elements $\{\p{1},\ldots, \p{12}\}$ and $\{\k{1},\ldots, \k{9}\}$. Let  $S=\{\p{2},\p{5},\p{12}\}$ and the sequence of ordinary pairs and the extra pair is 
	\begin{align*}
		&(\k{2}, \k{4}, \k{5}; \emptyset \mid \p{+4}, \p{+11})(\k{1}, \k{7}, \k{8};\p{-7} \mid \emptyset)(\k{3}; \p{-1} \mid \p{-6}, \p{+8})(\k{6}, \k{9}; \p{+3} \mid \p{-9})\\
		&\cup (\k{*}; \p{-10}, \p{*}).
	\end{align*}
In the example, we indicated the left and right parts of the ordinary blocks of \red elements by separating them with a bar. 
Note that it is allowed that one (but only one) of these parts is empty. 
\end{example}

\begin{example}
	An odd $(1,k)$-$E$-sequence is a sequence on the elements $\{\p{1}\}\cup\{\k{1},\ldots, \k{k}\}$. The unique \red element has to be contained in $S$, so the number of such sequences is always $1$. This explains combinatorially Corollary 6.3 in~\cite{OhnoSasaki2017}.
\end{example}

\begin{example}
	We count the cases of odd $(3,2)$-$E$-sequences. 
	\begin{itemize}
		\item[Case 1.] If $|S|=3$, then we have $1$ odd $E$-sequence.
		\item[Case 2.] If $|S|=1$, then two \red elements remain, $\{\p{1},\p{2}\}$, $\{\p{1},\p{3}\}$, or $\{\p{2},\p{3}\}$. The cases with  $\{\p{1},\p{2}\}$ are 
		\begin{itemize}
			\item If $\k{1}, \k{2}$ are in the same block, then we have $9$ cases:
		\begin{align*}
			&(\k{1}, \k{2}, \k{*}; \p{1}, \p{2}, \p{*}) && (\k{1}, \k{2}; \p{1}, \p{2} \mid \emptyset)(\k{*}; \p{*}) &&(\k{1}, \k{2}; \emptyset \mid \p{1}, \p{2})(\k{*};\p{*})\\
			&(\k{1},\k{2};\p{2} \mid \emptyset)(\k{*}; \p{1}, \p{*}) && (\k{1}, \k{2}; \emptyset \mid \p{2})(\k{*}; \p{1}, \p{*}) && (\k{1}, \k{2}; \p{2} \mid \p{1})(\k{*}; \p{*})\\
			&(\k{1}, \k{2}; \p{1} \mid \emptyset)(\k{*}; \p{2}, \p{*}) && (\k{1}, \k{2}; \emptyset \mid \p{1})(\k{*}; \p{2}, \p{*}) && (\k{1}, \k{2}; \p{1} \mid \p{2})(\k{*};\p{*})
		\end{align*}
			\item  If $\k{1}, \k{2}$ are not in the same block but $\p{1}, \p{2}$ are in the same block, then we have $4$ cases:
		\begin{align*}
			&(\k{1}; \p{1}, \p{2} \mid \emptyset)(\k{2}, \k{*}; \p{*}) && (\k{1}; \emptyset \mid \p{1}, \p{2})(\k{2}, \k{*}; \p{*})\\
			&(\k{2}; \p{1}, \p{2} \mid \emptyset)(\k{1}, \k{*}; \p{*}) && (\k{2}; \emptyset \mid \p{1}, \p{2})(\k{1}, \k{*}; \p{*}) 
		\end{align*}
			
			\item If $\k{1}$ and $\k{2}$ are in distinct blocks, and  $\p{1}$ and $\p{2}$ are in distinct blocks, then we have $28$ cases. 
		\begin{align*}
			&(\k{1}; \p{1} \mid \emptyset)(\k{2}, \k{*}; \p{2}, \p{*}) && (\k{1}; \emptyset \mid \p{1})(\k{2}, \k{*}; \p{2}, \p{*}) && (\k{1}; \p{1} \mid \p{2})(\k{2}, \k{*}; \p{*})\\
			&(\k{1}; \p{1}\mid \emptyset)(\k{2};\p{2} \mid \emptyset)(\k{*};\p{*}) && (\k{1}; \p{1} \mid \emptyset)(\k{2}; \emptyset \mid \p{2})(\k{*};\p{*})\\
			&(\k{1}; \emptyset \mid \p{1})(\k{2}; \p{2} \mid \emptyset)(\k{*};\p{*}) && (\k{1}; \emptyset \mid \p{1})(\k{1}; \emptyset \mid \p{2})(\k{*}; \p{*})
		\end{align*}
			In the list above, we can exchange the places of $\p{1}$ and $\p{2}$, similarly $\k{1}$ and $\k{2}$, and obtain so the 28 cases.
		\end{itemize}
		Now if we assign $+$ or $-$ to each \red element we have to multiply the number of cases by $4$. 
		
		So, $(9+4+28)\cdot 4=164$. In all the 3 cases ($\{\p{1},\p{2}\}$ $\{\p{1},\p{3}\}$ or $\{\p{2},\p{3}\}$) there are 164 cases, so we have all together $3\cdot 164=492$ possibilities. 
	\end{itemize}
	Hence, we have $493$ odd $(3,2)$-$E$-sequences.
\end{example}

\begin{theorem}
	The poly-Euler number $\widetilde{E}_{n}^{(-k)}$ counts odd $(n,k)$-$E$-sequences. 
\end{theorem}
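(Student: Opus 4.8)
The plan is to compute the exponential generating function of odd $(n,k)$-$E$-sequences in the variable $t$ that marks the red elements $\{\p{1},\dots,\p{n}\}$, and to match it with the series of the $\widetilde{E}_n^{(-k)}$. Since $\widetilde{E}_n^{(-k)}=nE_{n-1}^{(-k)}$, the defining series of the poly-Euler numbers gives
\[
\sum_{n\ge0}\widetilde{E}_n^{(-k)}\frac{t^n}{n!}=t\sum_{n\ge0}E_n^{(-k)}\frac{t^n}{n!}=\frac{\li_{-k}(1-e^{-4t})}{4\cosh t},
\]
so, writing $\mathcal{OE}(n,k)$ for the set of odd $(n,k)$-$E$-sequences, it suffices to prove that $\sum_{n\ge0}|\mathcal{OE}(n,k)|\,t^n/n!$ equals this last expression.

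First I would split an odd $(n,k)$-$E$-sequence according to its number $m\ge0$ of ordinary pairs. The blue data contribute the $t$-free factor $m!\sts{k+1}{m+1}$: partition $K=\{\k{1},\dots,\k{k}\}\cup\{\k{*}\}$ into $m+1$ non-empty blocks, single out the block containing $\k{*}$ as $B^{*}$, and linearly order the remaining $m$. On the red side, $\{\p{1},\dots,\p{n}\}$ is distributed among three parts: the special block $\overline{R}$, which for an odd $E$-sequence is simply a set of odd cardinality (generating function $\sinh t$); the elements of the extra block $R^{*}$ other than $\p{*}$, each carrying a sign but no left/right designation, as $R^{*}$ is not an ordinary block (generating function $e^{2t}$); and a linearly ordered tuple of $m$ non-empty ordinary red blocks $R_{1},\dots,R_{m}$ in which each element additionally carries a sign and a left-or-right designation, hence lies in one of four states (generating function $(e^{4t}-1)^{m}$). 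Here it is essential that $\p{*}$ receives no sign, as the worked examples show, so that it contributes no factor. As the three red parts partition $\{\p{1},\dots,\p{n}\}$, their generating functions multiply, and summing over $m$ yields
\[
\sum_{n\ge0}|\mathcal{OE}(n,k)|\frac{t^n}{n!}=\sinh(t)\,e^{2t}\sum_{m\ge0}m!\sts{k+1}{m+1}(e^{4t}-1)^{m};
\]
the ranges $m\le n$ and $m\le k$ need not be imposed separately, because $(e^{4t}-1)^{m}=O(t^{m})$ and $\sts{k+1}{m+1}=0$ for $m>k$.

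It remains to evaluate the inner sum. The case $r=0$ of \cref{theo:pBpol_closed form} reads $B_n^{(-k)}=\sum_{m}(m!)^2\sts{k+1}{m+1}\sts{n}{m}_1$; combining it with the $r=1$ instance $\sum_{n\ge0}\sts{n}{m}_1 t^n/n!=e^t(e^t-1)^m/m!$ of the $r$-Stirling generating function and the defining series $\sum_{n\ge0}B_n^{(-k)}t^n/n!=\li_{-k}(1-e^{-t})/(1-e^{-t})$ gives, as power series in $t$,
\[
\sum_{m\ge0}m!\sts{k+1}{m+1}(e^{t}-1)^{m}=\frac{\li_{-k}(1-e^{-t})}{e^{t}-1}.
\]
Replacing $t$ by $4t$ and substituting into the previous display, the theorem follows from the elementary identity
\[
\frac{\sinh(t)\,e^{2t}}{e^{4t}-1}=\frac{e^{t}(e^{2t}-1)/2}{(e^{2t}-1)(e^{2t}+1)}=\frac{1}{2(e^{t}+e^{-t})}=\frac{1}{4\cosh t}.
\]

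I expect the step needing the most care to be the red-side bookkeeping: one must attach exactly $e^{4t}-1$ to each ordinary red block (a factor $2$ for the sign and a factor $2$ for the left/right split, with non-emptiness producing the $-1$), only $e^{2t}$ to the part of the extra block other than $\p{*}$, a bare $\sinh t$ to the odd-size special block, and nothing at all to $\p{*}$. Any slip here alters the resulting hyperbolic factor, so it is prudent to check the formula against the cases $n=1,2,3$ already worked out among the examples. By contrast, recognising $\sum_{m}m!\sts{k+1}{m+1}z^{m}$ as a simple transform of the poly-Bernoulli generating function and then setting $z=e^{4t}-1$ is the conceptual heart but is a short manipulation once the $r$-Stirling generating function is in hand.
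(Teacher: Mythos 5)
Your proof is correct, but it takes a genuinely different route from the paper. The paper proves the theorem by showing that odd $E$-sequences are enumerated by Ohno--Sasaki's inclusion--exclusion formula $\widetilde{E}_n^{(-k)} = (-1)^{k} \sum_{m \text{ odd}} \binom{n}{m} \sum_{\ell=0}^k (-1)^{\ell} \ell!\sts{k}{\ell}2^{n-m}(2\ell+1)^{n-m}$: after fixing $\overline{R}$, each remaining red element is encoded by a triple (number of blue blocks to its left, left/right part, sign), which counts configurations with \emph{at most} $\ell$ blue blocks as $\ell!\sts{k}{\ell}2^{n-m}(2\ell+1)^{n-m}$, and inclusion--exclusion finishes the argument. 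You instead compute the exponential generating function of odd $E$-sequences by the product decomposition $\sinh(t)\cdot e^{2t}\cdot(e^{4t}-1)^m$ over the three red parts, evaluate $\sum_m m!\sts{k+1}{m+1}(e^{4t}-1)^m$ via the closed form for $B_n^{(-k)}$, and match the result with $\li_{-k}(1-e^{-4t})/(4\cosh t)$. Your bookkeeping is right on every delicate point (no sign on $\p{*}$, no left/right split on $R^*$, factor $4$ per element of a non-empty ordinary red block), and the final hyperbolic simplification is correct; I verified it reproduces $\widetilde{E}_3^{(-2)}=493$. What your approach buys: it needs only the defining generating function of $E_n^{(k)}$ rather than the cited Corollary 6.6 of Ohno--Sasaki, and the even case $\widehat{E}_n^{(-k)}$ follows from the identical computation with $\sinh t$ replaced by $\cosh t$, since $\cosh(t)e^{2t}/(e^{4t}-1)=1/(4\sinh t)$. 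What the paper's approach buys: the argument stays at the level of finite counting with inclusion--exclusion, consistent with the combinatorial style of the rest of the section, at the cost of importing the specific analytic identity from the literature.
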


\begin{proof}
To prove the theorem, we recall an inclusion-exclusion type formula shown by Ohno and Sasaki~\cite[Corollary 6.6]{OhnoSasaki2017},

\begin{align}\label{form:poly_euler_inc_excl}
	\widetilde{E}_n^{(-k)} = \sum_{\eal{m=1}{m:\text{odd}}}^{n} \binom{n}{m} \sum_{\ell=0}^k (-1)^{k-\ell} \ell!\sts{k}{\ell}2^{n-m}(2\ell+1)^{n-m}
\end{align}
for any non-negative integers $n$ and $k$.

We show that odd $E$-sequences are enumerated by \eqref{form:poly_euler_inc_excl}. 
Let $|S|=m$, i.e., we choose from the $n$ \red elements $m$ elements that will not be used in forming the Callan pairs ($\binom{n}{m}$ possibilities). 

The remaining $n-m$ \red elements and $k$ \blue elements form a sequence of pairs. To count them, we similarly use the inclusion-exclusion principle to the proof of \cref{Bnkr-anti}. Our objects here are permutations of these elements,
\[
	R_0 B_1 R_1 B_2 R_2 \cdots B_j R_j \quad (1 \leq j \leq k),
\]
satisfying the following conditions.
\begin{enumerate}
	\item In each block, all \red elements are in increasing order. Note that the blocks $R_0$ and $R_j$ can be empty.
	\item Each \red element in $R_1, \dots, R_j$ has two data $(p, s) \in \{\mathrm{left}, \mathrm{right}\} \times \{+, -\}$. Each \red element in $R_0$ has one data $s \in \{+,-\}$. 
\end{enumerate}
If all \blue elements in each $B_i$ are also in increasing order, then we can similarly obtain an $E$-sequence as in the proof of \cref{Bnkr-anti}. For instance, the sequence in \cref{example_first} is given from
\begin{align*}
	\p{10}^-, \k{2}, \k{4}, \k{5}, \p{4}^{r,+}, \p{11}^{r,+}, \k{1}, \k{7}, \k{8}, \p{7}^{l,-}, \k{3}, \p{1}^{l,-}, \p{6}^{r,-}, \p{8}^{r,+}, \k{6}, \k{9}, \p{3}^{l,+}, \p{9}^{r,-}.
\end{align*}

For a subset $A \subset \{\k{2}, \dots, \k{k}\}$, let $\alpha(A)$ (resp. $\beta(A)$) denote the number of permutations $R_0B_1R_1 \cdots B_j R_j$ with $D(B_1) \cup \cdots D(B_j) = A$ (resp. $D(B_1) \cup \cdots D(B_j) \supset A$), where $D(B_i)$ is the descent set of $B_i$. Then by the above observation, we see that
\[
	\widetilde{E}_n^{(-k)} = \sum_{\substack{m=1 \\ m: \text{odd}}}^n {n \choose m} \alpha(\emptyset).
\]
For $k \geq 2$, by the inclusion-exclusion principle,
\[
	\alpha(\emptyset) = \sum_{\ell=0}^{k-1} (-1)^\ell \sum_{\substack{A \subset \{\k{2}, \dots, \k{k}\}\\|A| = \ell}} \beta (A).
\]
First, we construct $(k-\ell)$-non-empty ordered blocks of $\{\k{1}, \dots, \k{k}\}$ in $\sts{k}{k-\ell} (k-\ell)!$ ways. 
In each block, we record the elements in decreasing order. For each \red element, there are $2(k-\ell)+1$ possibilities for choosing the position and the data $p \in \{\mathrm{left}, \mathrm{right}\}$, and $2$ possibilities to choose the sign $s \in \{+,-\}$. 
If two blocks of \blue elements are consecutive, then we connect them. The resulting permutations have at least $\ell$ descents in blocks of \blue elements. 
Therefore we have
\[
	\sum_{\substack{A \subset \{\k{2}, \dots, \k{k}\} \\ |A| = \ell}} \beta(A) = \sts{k}{k-\ell} (k-\ell)! (2(k-\ell)+1)^{n-m} 2^{n-m},
\]
that is,
\[
	\alpha(\emptyset) = \sum_{\ell=0}^k (-1)^{k-\ell} \sts{k}{\ell} \ell! (2\ell+1)^{n-m} 2^{n-m}.
\]
For the cases in $k = 0$ and $k=1$, we can check the above formula for $\alpha(\emptyset)$ also works.
\end{proof}

\begin{theorem}
	The poly-Euler number of the second kind $\widehat{E}_{n}^{(-k)}$ counts even $(n,k)$-$E$-sequences. 
\end{theorem}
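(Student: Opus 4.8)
The plan is to mirror the proof of the preceding theorem almost verbatim, with ``odd'' replaced throughout by ``even'', once the appropriate inclusion--exclusion expansion of $\widehat{E}_n^{(-k)}$ is available. So the first step is to establish the even analogue of \eqref{form:poly_euler_inc_excl},
\begin{align*}
	\widehat{E}_n^{(-k)}=(-1)^{k}\sum_{\substack{m=0\\m\ \mathrm{even}}}^{n}\binom{n}{m}\sum_{\ell=0}^{k}(-1)^{\ell}\,\ell!\sts{k}{\ell}\,2^{n-m}(2\ell+1)^{n-m},
\end{align*}
which I would read off from the generating function. From $\frac{\li_{-k}(1-e^{-t})}{1-e^{-t}}=\sum_{m}(-1)^{k+m}m!\sts{k}{m}e^{(m+1)t}$ (which follows from \eqref{pBpol_sieve} at $r=0$), multiplying by $1-e^{-t}$ and replacing $t$ by $4t$ gives $\li_{-k}(1-e^{-4t})=\sum_m(-1)^{k+m}m!\sts{k}{m}\bigl(e^{4(m+1)t}-e^{4mt}\bigr)$; dividing by $4\sinh t$ and using $e^{4t}-1=2e^{2t}\sinh 2t$ with $\sinh 2t=2\sinh t\cosh t$ collapses each summand to $e^{(4m+2)t}\cosh t=\tfrac{1}{2}\bigl(e^{(4m+3)t}+e^{(4m+1)t}\bigr)$, so that
\begin{align*}
	\widehat{E}_n^{(-k)}=\frac{1}{2}\sum_{m=0}^{k}(-1)^{k+m}m!\sts{k}{m}\bigl((4m+3)^{n}+(4m+1)^{n}\bigr).
\end{align*}
Expanding $(a+1)^{n}+(a-1)^{n}=2\sum_{j\ \mathrm{even}}\binom{n}{j}a^{n-j}$ with $a=2(2m+1)$ turns this into the displayed double sum; checks against \cref{table:poly_Euler} (for instance $\widehat{E}_2^{(0)}=5$ and $\widehat{E}_1^{(-1)}=6$) confirm the bookkeeping. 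Alternatively, this formula can be quoted from Komatsu~\cite{Komatsu2020}.

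The second step is the combinatorial enumeration, which runs word-for-word as in the previous theorem but with $\overline{R}$ of \emph{even} size. I would fix $|\overline{R}|=m$ even, choose the $m$ red elements of $\overline{R}$ in $\binom{n}{m}$ ways, delete $\p{*}$ and $\k{*}$, straighten the sequence into an alternating word of blue and red blocks, and encode each of the remaining $n-m$ red elements by a triple $(b,p,s)$, where $b\in\{0,\dots,\ell\}$ is the number of blue blocks to its left, $p\in\{\mathrm{left},\mathrm{right}\}$ if the element lies in an ordinary block and $p=\emptyset$ if it lies in the extra block, and $s\in\{+,-\}$ is its sign. Once the $k$ blue elements are arranged into an ordered partition into $\ell$ blocks in $\ell!\sts{k}{\ell}$ ways, this yields $2^{n-m}(2\ell+1)^{n-m}$ codes; counting the sequences with at most $\ell$ blue blocks and applying inclusion--exclusion exactly as in the odd case produces the alternating factor $\sum_{\ell}(-1)^{\ell}\ell!\sts{k}{\ell}$ and the overall sign $(-1)^{k}$. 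Summing over even $m$ (the case $m=0$, $\overline{R}=\emptyset$, being permitted) reproduces the expansion of Step~1, so even $(n,k)$-$E$-sequences are enumerated by $\widehat{E}_n^{(-k)}$.

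The only point that needs care is Step~1: one has to verify that the pole of $1/\sinh t$ at $t=0$ is cancelled by the simple zero of $\li_{-k}(1-e^{-4t})$ there, and that the collapse via $\sinh 2t=2\sinh t\cosh t$ is done correctly. As a guard against slips, the identical manipulation with $\sinh$ and $\cosh$ interchanged in the denominator sends $\frac{\li_{-k}(1-e^{-4t})}{4\cosh t}$ to $\sum_m(-1)^{k+m}m!\sts{k}{m}e^{(4m+2)t}\sinh t$, which is exactly the odd formula \eqref{form:poly_euler_inc_excl} for $\widetilde{E}_n^{(-k)}$; swapping $\sinh t$ for $\cosh t$ there is precisely what turns ``odd $m$'' into ``even $m$'' here. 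Once the even formula is secured, the combinatorial half is a transcription of the odd case and introduces no new ideas.
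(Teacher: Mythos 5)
Your proof is correct and takes essentially the same approach as the paper: the paper simply quotes the even inclusion--exclusion formula from Komatsu's Lemma 3.3 (written there as $(4\ell+2)^{n-m}$, which equals your $2^{n-m}(2\ell+1)^{n-m}$) and then observes that the combinatorial encoding from the odd case carries over verbatim. Your generating-function derivation of that formula is a correct, self-contained substitute for the citation, but it does not change the method.
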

\begin{proof}
	Komatsu~\cite[Corollary 3.5]{Komatsu2017} showed the analogue formula of \eqref{form:poly_euler_inc_excl} for the poly-Euler number of the second kind,
	\begin{align*}
		\widehat{E}_n^{(-k)} = \sum_{\eal{m=0}{m:\text{even}}}^{n}\binom{n}{m}\sum_{\ell=0}^{k} (-1)^{k-\ell} \ell!\sts{k}{\ell}2^{n-m} (2\ell+1)^{n-m}.
	\end{align*}
By the previous argument, the theorem follows. 
\end{proof}

We explicitly consider the interesting special case when $k=0$.
The special case $\widetilde{E}_n^{(0)}$ is the number sequence A003462 in~\cite{OEIS}, while $\widehat{E}_n^{(0)}$ is the sequence A007051 in~\cite{OEIS}. 
Due to our interpretation, these sequences enumerate odd and even $E$-sequences containing only \red elements. If there are no \blue elements, there are no ordinary pairs and the $E$-sequence is constructed with the sets $R^*$ and $S$ of \red elements. 
However, the elements in $R^*$ can be $+$ or $-$. This means that the number of such $E$-sequences is the number of ways of selecting the $n$ elements into three sets: $S$, $R^*(+)$ and $R^*(-)$, 
where $R^*(+)$ denotes the set of \red elements in the extra block that were assigned by $+$, ($R^*(-)$ is defined similarly).  
Hence, we have

\begin{align*}
	\widetilde{E}_{n}^{(0)} = \sum_{j=0}^{\lfloor \frac{n-1}{2} \rfloor}\binom{n}{2j+1} 2^{n-2j-1}\quad \mbox{and}\quad \widehat{E}_{n}^{(0)} = \sum_{j=0}^{\lfloor \frac{n}{2} \rfloor}\binom{n}{2j} 2^{n-2j}.
\end{align*} 
On the other hand, we have another closed formula. 

\begin{lemma}\label{lemma:pE_special_case}
	For $n \geq 0$, it holds
	\begin{align*}
		\widetilde{E}_n^{(0)} = \frac{3^{n}-1}{2}\quad \mbox{and}\quad \widehat{E}_n^{(0)} = \frac{3^{n}+1}{2}.
	\end{align*}
\end{lemma}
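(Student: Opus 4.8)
The plan is to start from the two explicit sums displayed just before the lemma, which I would first rewrite in the uniform shape
\[
\widetilde E_n^{(0)} = \sum_{\substack{0\le m\le n\\ m \text{ odd}}} \binom{n}{m} 2^{n-m},
\qquad
\widehat E_n^{(0)} = \sum_{\substack{0\le m\le n\\ m \text{ even}}} \binom{n}{m} 2^{n-m},
\]
where $m=|\overline R|$ is the number of red elements placed in the neutral block. The key observation is that the full sum and the alternating sum over all $m$ are trivial binomial evaluations:
\[
\sum_{m=0}^n \binom{n}{m} 2^{n-m} = (1+2)^n = 3^n,
\qquad
\sum_{m=0}^n (-1)^m \binom{n}{m} 2^{n-m} = (2-1)^n = 1.
\]
Adding these two identities isolates the even-$m$ terms and gives $2\,\widehat E_n^{(0)} = 3^n+1$; subtracting them isolates the odd-$m$ terms and gives $2\,\widetilde E_n^{(0)} = 3^n-1$. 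This already yields both formulas.

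I would also record the equivalent purely combinatorial derivation, since it makes the $3^n$ transparent. At $k=0$ an $E$-sequence on $\{\p{1},\dots,\p{n}\}$ amounts to assigning to each element $i$ one of three states — \emph{neutral} (meaning $i\in\overline R$), \emph{positive}, or \emph{negative} — because with no blue elements there are no ordinary pairs and hence no left/right-part data, so the whole structure is just the choice of $\overline R$ together with a sign for each element of $R^*$. The sequence is odd or even according to the parity of the number of neutral elements. Hence $\widetilde E_n^{(0)}+\widehat E_n^{(0)}$ counts all $3^n$ such labelings, while $\widehat E_n^{(0)}-\widetilde E_n^{(0)} = \sum_L (-1)^{c(L)}$, where $c(L)$ is the number of neutral elements of the labeling $L$, factors over the $n$ elements as $(-1+1+1)^n = 1$. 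Solving the two linear relations again gives $\widehat E_n^{(0)} = (3^n+1)/2$ and $\widetilde E_n^{(0)} = (3^n-1)/2$; equivalently, inspecting the state of the last element gives the recursions $\widetilde E_{n+1}^{(0)} = 2\,\widetilde E_n^{(0)} + \widehat E_n^{(0)}$ and $\widehat E_{n+1}^{(0)} = 2\,\widehat E_n^{(0)} + \widetilde E_n^{(0)}$, from which one inducts starting at $\widetilde E_0^{(0)} = 0$, $\widehat E_0^{(0)} = 1$.

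There is essentially no obstacle here: once the $k=0$ case is reduced to a three-state labeling of $n$ elements, both the count $3^n$ and the signed count $1$ are immediate, and the rest is solving a $2\times 2$ linear system. The only step that deserves a sentence of care is the reduction itself — verifying that when $k=0$ the left/right separation of ordinary red blocks and the extra-block bookkeeping collapse to ``each element is neutral, $+$, or $-$'' — but this is exactly the description already given in the paragraph preceding the lemma, so nothing new is required.
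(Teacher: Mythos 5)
Your proof is correct. The reduction to a three-state labeling of the $n$ red elements (neutral, $+$, $-$) is exactly the setup the paper uses, and from there your route diverges: you evaluate the total count $\sum_m \binom{n}{m}2^{n-m}=3^n$ and the signed count $\sum_m(-1)^m\binom{n}{m}2^{n-m}=1$ by the binomial theorem and solve the resulting $2\times 2$ system, whereas the paper proves that the even-count exceeds the odd-count by exactly $1$ via a sign-reversing involution: it toggles the first letter not equal to $-$ between $0$ and $+$, which pairs off all words except the all-$-$ word (which has an even number of zeros). The two arguments are close cousins --- your observation that $\sum_L(-1)^{c(L)}=(-1+1+1)^n$ factors over the elements is precisely the algebraic shadow of that involution --- but the paper's version is bijective and so fits the combinatorial spirit of the section, while yours is shorter and entirely routine once the sums displayed before the lemma are in hand. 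Either is acceptable; if you keep the algebraic version, the one step worth a sentence is the one you already flagged, namely that at $k=0$ there are no ordinary pairs and hence no left/right data, so the structure really does collapse to a word in $\{0,+,-\}^n$.
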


\begin{proof}
	Let $0$ mark the elements that are in $S$. Then an $E$-sequence with only \red elements is determined by assigning to each element a ``sign" $\{0,+,-\}$.  We define an involution on the set of words, $w$, on the alphabet $\{0,+,-\}$ of length $n$. 
	The involution is based on the first entry not equal to $-$, i.e., $\min i$ with $w_i\not = -$. Exchange the $i$-th entry: if $w_i=+$ change it to $w_i = 0$ and if it is $w_i=0$ change it to $w_i=+$. 
	It is clearly a bijection between the set of words with an odd number of zeros and an even number of zeros. 
	The involution can be applied to all words except the word with $w_j=-$ for all $j$. This word has an even number of zeros. 
	We proved that the number of words with an even number of zeros is $(3^{n}+1)/2$, while the number of words with an odd number of zeros is $(3^{n}-1)/2$.
\end{proof}

Ohno and Sasaki~\cite[Corollary 6.3]{OhnoSasaki2017} mention another interesting special case, $n=2$,
\begin{align*}
	\widetilde{E}_2^{(-k)}=4(2^{k+1}-1).
\end{align*}
	We can see this equality in an elementary way based on our interpretation as follows. If there are two \red elements, exactly one of them creates the set $S$. The other \red element has a sign $+$ or $-$. 
	If this \red element is in the extra block, then all the \blue elements have to be in the extra block, so there is one such odd $E$-sequence. If this \red element is in the single ordinary block, then the \blue elements go to the ordinary or to the extra block, which gives $2^{k}-1$ possibilities. 
	(Each element has a choice to which block it goes ($2^{k}$ possibilities), but the case when all would go to the extra block is not allowed.) 
	Finally, we have to determine the part (left or right) of the ordinary block for the \red element. So, the number of $(2,k)$-$E$-sequences is $2^2(2(2^{k}-1)+1)$.   

Next, we present combinatorial proofs for some identities involving the poly-Euler numbers. 
Analogous results can be derived for the poly-Euler numbers of the second kind, but we omit the explicit proofs here. 

\begin{theorem}\label{theo:poly_euler_closed}
We have the following combinatorial formula for $\widetilde{E}_n^{(-k)}$,
		\begin{align*}
			\widetilde{E}_n^{(-k)}=	\sum_{j=0}^{\lfloor\frac{n-1}{2}\rfloor} \binom{n}{2j+1}2^{n-2j-1}\sum_{m= 0}^km!\sts{k+1}{m+1}\sum_{s=0}^{m} 2^{m-s}\binom{m}{s}(m+s)!\sts{n-2j}{m+s+1}.
		\end{align*}
\end{theorem}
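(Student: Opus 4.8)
The plan is to enumerate odd $(n,k)$-$E$-sequences by first fixing the special block $\overline R$ and then decomposing the remaining structure into a blue ordered set partition together with a placement of the signed red elements. First I would choose the odd-size special block: pick $|\overline R| = 2j+1$ red elements in $\binom{n}{2j+1}$ ways, and assign a sign $+$ or $-$ to each of the remaining $n-2j-1$ red elements, contributing the factor $2^{n-2j-1}$ (the elements of $\overline R$ carry no sign). What remains after removing $\overline R$ is, up to signs, an ordinary abundant Callan sequence of size $(n-2j)\times k$ — with $n-2j$ red "positions," thinking of $\p*$ as one more red element — in which the red ordinary blocks are moreover split into a left and a right part. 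So the heart of the matter is to count, for fixed $m$, the abundant Callan sequences of size $(n-2j)\times k$ with exactly $m$ ordinary pairs in which each ordinary red block is cut into two (possibly one empty) consecutive parts.

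The blue side is standard: an ordered partition of $K = \{\k1,\dots,\k k\}\cup\{\k*\}$ into $m+1$ blocks, with the block of $\k*$ distinguished, is counted by $m!\sts{k+1}{m+1}$, exactly as in the proof of \cref{theo:pBpol_closed form}. For the red side I would argue as follows: distributing the $n-2j$ red elements (the true elements $\p1,\dots,\p{n-2j}$ together with $\p*$, so $n-2j+1$ of them if we also count $\p*$, but $\p*$'s block is forced to be the extra block $R^*$, so effectively $n-2j$ free red elements over $m$ ordinary blocks and one extra block) into $m$ ordinary blocks, each of which is then split left/right, is the same as distributing them into $2m$ labeled blocks with the two halves of each ordinary pair being distinguishable but in which left and right halves of the same pair are not both required nonempty — i.e. only the union of the two halves must be nonempty. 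Counting set partitions of a pointed set into $m+s+1$ nonempty blocks where $s$ of the original $m$ blocks have been "doubled": choose which $s$ of the $m$ ordinary blocks are genuinely split into two nonempty halves in $\binom{m}{s}$ ways; the $m-s$ unsplit blocks still get a left/right label for their single nonempty part, giving $2^{m-s}$; and then the number of ways to partition the pointed red set $\{1,\dots,n-2j\}\cup\{\dagger\}$ (with $\dagger$ playing the role of $\p*$, weight zero since $\overline R$ is already removed) into the resulting $m+s+1$ nonempty blocks and linearly order them is $(m+s)!\sts{n-2j}{m+s+1}$, using the Stirling-number generating function and the fact that the extra block $R^*$ is allowed to contain elements other than $\p*$. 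Summing over $s$ from $0$ to $m$ and then over $m$ from $0$ to $k$ and over $j$ gives exactly the stated formula.

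I expect the main obstacle to be the bookkeeping in the red-side count: making sure the left/right splitting is counted without over- or under-counting the "one part empty" cases, and correctly matching the index $m+s+1$ of $\sts{n-2j}{m+s+1}$ to the number of nonempty red blocks (ordinary halves plus the extra block $R^*$). A clean way to justify it is to set up an explicit bijection between (ordinary abundant Callan sequences on $n-2j$ reds with $m$ left/right-split ordinary pairs) and (pairs consisting of a subset $S$ of split pairs together with an ordered set partition of the pointed red set into $m+s+1$ blocks plus left/right labels on the $m-s$ unsplit ones), and then check that under this bijection the blue ordered partition contributes the factor $m!\sts{k+1}{m+1}$ independently. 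Once the $m,s$ sum is identified, the remaining $j$-sum is just the choice of $\overline R$ and signs, so the formula drops out; as a sanity check one can verify it against the $n=3,k=2$ computation in the preceding example, where it should return $493$.
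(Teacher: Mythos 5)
Your proposal is correct and follows essentially the same decomposition as the paper's proof: choose $\overline R$ of odd size and assign signs ($\binom{n}{2j+1}2^{n-2j-1}$), build the ordered blue partition ($m!\sts{k+1}{m+1}$), and then count the red side by distinguishing the $s$ ordinary pairs whose left and right halves are both nonempty, yielding $\binom{m}{s}2^{m-s}(m+s)!\sts{n-2j}{m+s+1}$. The only blemish is a notational off-by-one in your description of the pointed red set (after removing $\overline R$ there remain $n-2j-1$ true red elements, so together with $\p{*}$ the set has $n-2j$ elements, consistent with $\sts{n-2j}{m+s+1}$), which does not affect the argument.
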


\begin{proof}
	Choose $2j+1$ \red elements for $S$ in $\binom{n}{2j+1}$ ways and construct from the remaining elements a sequence of pairs of blocks as follows. 
	Let $m$ be the number of ordinary pairs.
	 
	First, construct an ordered partition of the \blue elements, $\{\k{1},\ldots, \k{k},\k{*}\}$, with $m+1$ blocks such that the block containing the element $\k{*}$ is the last. This can be done in $m!\sts{k+1}{m+1}$ ways.
	 
	Now we turn our attention to the \red elements. We assign to each \red element a sign in $2^{n-2j-1}$ ways.
	We have to create $m$ ordinary blocks with two parts and an extra block from the remaining $n-2j-1$ \red elements.
	
	Let $s$ be the number of those ordinary blocks where none of the parts is empty. In other words in $m-s$ ordinary blocks, one of the parts (left or right) is empty. 
	This means that we need to create $2s+(m-s)+1$ blocks from the \red elements. 
	This can be done in $\sts{n-2j}{m+s+1}$ ways. (The $+1$ stands for the extra element $\p{*}$). The block containing $\p{*}$ is paired with the extra block of \blue elements. 
	
	Take a linear arrangement of the $m+s$ ordinary blocks, $\pi$. Let the first $m$ blocks in $\pi$ be paired to the ordinary blocks of \blue elements in the same order as in $\pi$. 
	Now choose $s$ from the $m$ ordinary pairs in $\binom{m}{s}$ ways and add the \red elements in the $(m+i)$-th block as the right part in the $i$-th chosen pair. 
	In the case of the other ordinary pairs, so where we added only one block of \red elements, choose to which part the block should belong in $2^{m-s}$ ways.
\end{proof}

We give combinatorial proof for a formula given by Ohno and Sasaki~\cite[Theorem 10.1]{OhnoSasaki2017}.
\begin{theorem}\cite{OhnoSasaki2017}\label{theorem:Ohno-Sasaki}
	For $n > 0$ and $k \geq 0$, we have 
	\begin{align*}
		\widetilde{E}_{n}^{(-k)} = \sum_{m=0}^{\min(n-1,k)}(m!)^2 \left(\sum_{\ell=1}^{n-m} \binom{n}{\ell}\widetilde{E}^{(0)}_{\ell}\sts{n-\ell}{m} 4^{n-\ell}\right)\sts{k+1}{m +1}.
	\end{align*}
\end{theorem}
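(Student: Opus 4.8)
The strategy is to give a bijective/counting proof by decomposing an odd $(n,k)$-$E$-sequence according to the structure of the ordinary and extra blocks restricted to the red elements \emph{not} in $\overline{R}$, isolating a sub-structure that is itself essentially an $E$-sequence on red elements only. The right-hand side suggests the following reading: $m$ is the number of ordinary pairs; the blue elements $\{\k{1},\dots,\k{k},\k{*}\}$ are partitioned into $m+1$ blocks with the $m$ ordinary blocks ordered, contributing $m!\sts{k+1}{m+1}$; the two factors $(m!)^2$ and the inner sum over $\ell$ account for placing the red elements not in $\overline R$.

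\textbf{First step.} I would fix $\overline R$ by choosing the $2j+1$ red elements it contains; by \cref{lemma:pE_special_case}'s proof (or directly), the remaining red-only structure on $\overline R$ together with its signs contributes the factor $\binom{n}{\ell}\widetilde E^{(0)}_\ell$ once we re-bundle things — more precisely, I would \emph{not} fix $|\overline R|$ first, but instead first choose a subset $S$ of $\ell$ red elements ($1 \le \ell \le n-m$) that will be ``consumed'' by the extra block together with $\overline R$, in $\binom{n}{\ell}$ ways, and build from $S$ an odd $E$-sequence containing only red elements (so that $\overline R \subseteq S$ has odd size and the rest of $S$ is signed and assigned left/right in the single degenerate ordinary-or-extra block): this sub-object is counted by $\widetilde E^{(0)}_\ell$. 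This is the key re-interpretation: the extra block $R^*$ of the whole $E$-sequence, stripped of $\p{*}$, behaves exactly like the ordinary block of an $E$-sequence on red elements only, and $\overline R$ is its $\overline R$; hence the pair $(\overline R, R^* \setminus \{\p{*}\})$ with all signs and left/right data is an odd $(\ell,0)$-$E$-sequence.

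\textbf{Second step.} With the $\ell$ red elements of $S$ removed, the remaining $n-\ell$ red elements must be distributed into the $m$ ordinary pairs. Partition $\{\k{1},\dots,\k{k},\k{*}\}$ into $m+1$ blocks and order the ordinary ones: $m!\sts{k+1}{m+1}$ ways. Now the $n-\ell$ red elements are partitioned among the $m$ ordinary blocks (nonempty, since ordinary blocks must be nonempty), signed, and each block split into a left and a right part; the factor $4^{n-\ell}$ packages the sign ($\times 2$) and the part-choice ($\times 2$) for each of the $n-\ell$ red elements, and distributing $n-\ell$ labeled elements into $m$ ordered nonempty blocks is $m!\sts{n-\ell}{m}$ — wait, more carefully it is the number of surjections, $\sts{n-\ell}{m}m!$, but the statement has $(m!)^2\sts{n-\ell}{m}$. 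The extra $m!$ comes from the \emph{ordering of the blue ordinary blocks having already been used}; so instead the accounting is: $m!\sts{k+1}{m+1}$ for the blue side with ordering, and then on the red side one orders the $m$ red ordinary blocks independently ($m!$) before pairing them with the blue ones in their fixed order, plus $\sts{n-\ell}{m}$ for the set partition — this is exactly $(m!)^2 \sts{k+1}{m+1}\sts{n-\ell}{m}$ after absorbing the $4^{n-\ell}$ for signs and parts. Summing over $\ell$ and $m$ gives the claimed formula.

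\textbf{Main obstacle.} The delicate point is the left/right part bookkeeping together with the nonemptiness constraints: an ordinary red block is split into a left and right part where \emph{one} part may be empty but not both, and one must be sure this ``$\times 2$ per element'' encoding via $4^{n-\ell}$ is correct and does not double-count (the same set partition into blocks, with the same total sign assignment, should correspond to distinct sequences exactly when the left/right split differs). I expect I would handle this the way \cref{theo:poly_euler_closed} does — encoding each red element by a small tuple — and then argue that the map to $(\ell, m, \text{blue partition}, \text{red partition}, \text{signs}, \text{parts})$ is a bijection. The second subtlety is confirming that the degenerate ``extra'' part really is governed by $\widetilde E^{(0)}_\ell$ and not by some shifted quantity, i.e., that moving $R^*$ to the front and deleting $\p{*},\k{*}$ as in the proof of the first theorem of this section turns $(\overline R, R^*)$ precisely into an odd $(\ell,0)$-$E$-sequence; this should follow directly from the definitions but deserves an explicit sentence.
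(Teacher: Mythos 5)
Your proposal is correct and follows essentially the same decomposition as the paper's proof: choose the $\ell$ red elements destined for $\overline{R}\cup R^*$, count their internal structure by $\widetilde{E}_\ell^{(0)}$, build the blue side in $m!\sts{k+1}{m+1}$ ways, the red ordinary blocks in $m!\sts{n-\ell}{m}$ ways, and absorb signs and left/right parts into $4^{n-\ell}$. The "obstacle" you flag is not one: since each ordinary red block is nonempty, the per-element left/right choice can never make both parts empty, so the $2^{n-\ell}$ factor for parts is exact.
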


\begin{proof}
	Let $\ell$ be the number of \red elements in $S\cup R^*$. We know that the number of creating the sets is $\binom{n}{\ell}\widetilde{E}_{\ell}^{(0)}$. 
	Let $m$ be the number of ordinary blocks in the $E$-sequence, so $\ell$ goes from $1$ ($S$ is non-empty) to $n-m$ because every ordinary pair has to contain at least one \red element.
	The remaining $n-\ell$ \red elements are partitioned into $m$ ordered blocks in $m!\sts{n-\ell}{m}$ ways. 
	Further, to each element we add a sign and a position (left or right part) in $4^{n-\ell}$ ways. 
	Finally, the ordered partition of \blue elements can be constructed in $m!\sts{k+1}{m+1}$ ways. 
\end{proof}

Our next goal is to show combinatorially some relations between the poly-Euler numbers and poly-Bernoulli numbers. 
First, we present the connection with the poly-Bernoulli numbers of type $C$ introduced in~\cite{ArakawaKaneko1999_2}. 
In our setting, these numbers are the values of the poly-Bernoulli polynomials at $x=-1$, that is, $C_n^{(k)} = B_n^{(k)}(-1)$. 
Similarly, as the poly-Bernoulli numbers $B_n^{(k)} = B_n^{(k)}(0)$ (sometimes called poly-Bernoulli numbers of type $B$), the numbers with negative $k$ indices are integers, and have several combinatorial interpretations (see~\cite{BenyiHajnal2017}). 
Most of the objects that are enumerated by $C_n^{(-k)}$ are slight modifications of those enumerated by $B_n^{(-k)}$. However, some objects arise naturally in the context of type $C$. 
Since the following fact is important for our next result, we recall it as a lemma. 

\begin{lemma}\label{lemma: pB_type_C}\cite{BenyiHajnal2017}
	The number of $(n,k)$-Callan sequences such that the extra \red block contains only $\p{*}$ is $C_n^{(-k)}$. 	
\end{lemma}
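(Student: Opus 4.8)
The plan is to deduce the lemma from the interpretation of $B_n^{(-k)}(x)$ by abundant Callan sequences, combined with a sign-reversing involution. By the theorem on abundant Callan sequences we have $\sum_{C \in \mathcal{AC}(n,k)} x^{|\overline{R}(C)|} = B_n^{(-k)}(x)$, where $\overline{R}(C)$ is the special block of $C$; specializing at $x = -1$ gives
\[
C_n^{(-k)} = B_n^{(-k)}(-1) = \sum_{C \in \mathcal{AC}(n,k)} (-1)^{|\overline{R}(C)|}.
\]
So it suffices to show that this signed sum equals the number of $(n,k)$-Callan sequences whose extra red block is $\{\p{*}\}$.

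For $C = (B_1,R_1)\cdots(B_m,R_m)\cup(B^*,R^*)\cup\overline{R}$, let $T(C) = \overline{R} \cup (R^* \setminus \{\p{*}\})$ be the set of red elements other than $\p{*}$ lying outside the ordinary pairs. Define $\varphi \colon \mathcal{AC}(n,k) \to \mathcal{AC}(n,k)$ by: if $T(C) = \emptyset$, put $\varphi(C) = C$; otherwise let $v = \min T(C)$ and let $\varphi(C)$ be obtained by moving $v$ from whichever of $\overline{R}$, $R^*$ contains it into the other block, leaving every ordinary pair, every blue block, and $\p{*}$ untouched. Then one checks the routine points: $\varphi(C)$ is again an abundant Callan sequence (ordinary blocks are unchanged and stay nonempty, $R^*$ retains $\p{*}$, and $\overline{R}$ is allowed to be empty); $\varphi$ is an involution, since the move leaves the set $T$ unchanged and only switches the side of its minimum, so a second application undoes it; and $\varphi$ is sign-reversing on its non-fixed points, since the move changes $|\overline{R}|$ by exactly $\pm 1$.

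Consequently the non-fixed points cancel in pairs, and
\[
C_n^{(-k)} = \sum_{\substack{C \in \mathcal{AC}(n,k) \\ T(C) = \emptyset}} (-1)^{|\overline{R}(C)|}.
\]
The condition $T(C) = \emptyset$ says exactly that $\overline{R} = \emptyset$ and $R^* = \{\p{*}\}$; an abundant Callan sequence with $\overline{R} = \emptyset$ is precisely an ordinary $(n,k)$-Callan sequence, and on these the weight is $(-1)^0 = 1$. Hence the last sum counts the $(n,k)$-Callan sequences with extra red block $\{\p{*}\}$, which proves the lemma.

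The only mildly delicate point is checking that $\varphi$ is well defined and involutive in the boundary cases (an empty $\overline{R}$, or $R^*$ shrinking to $\{\p{*}\}$); everything else is bookkeeping. An alternative, purely arithmetic route is also available: classifying $(n,k)$-Callan sequences by the set $R^* \setminus \{\p{*}\}$ yields $B_n^{(-k)} = \sum_{j} \binom{n}{j} f(n-j)$, where $f(i)$ is the number with extra red block $\{\p{*}\}$, and binomial inversion together with \eqref{Bernoulli-poly-exp} (valid for all $x$ as a polynomial identity) gives $f(n) = B_n^{(-k)}(-1) = C_n^{(-k)}$. I would nonetheless prefer the involution, since it stays within the combinatorial framework of the paper.
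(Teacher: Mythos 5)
Your proof is correct. Note that the paper itself offers no proof of this lemma: it is recalled as a known fact and attributed to \cite{BenyiHajnal2017}, so there is no in-paper argument to compare against. What you have produced is a clean, self-contained derivation that stays entirely within the machinery the paper builds in Section \ref{subsec: abundant Callan}: the specialization $C_n^{(-k)} = B_n^{(-k)}(-1) = \sum_{C} (-1)^{|\overline{R}(C)|}$ is legitimate because the abundant-Callan identity is a polynomial identity in $x$ (it reduces to \cref{Bernoulli-poly-exp}), and your involution $\varphi$ is genuinely well defined and sign-reversing --- the set $T(C) = \overline{R} \cup (R^*\setminus\{\p{*}\})$ is preserved by the move, so its minimum is stable and $\varphi^2 = \mathrm{id}$; the boundary cases you flag ($\overline{R}$ becoming empty, $R^*$ shrinking to $\{\p{*}\}$) are all permitted by the definition of abundant Callan sequences. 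The fixed points are exactly the Callan sequences with $\overline{R}=\emptyset$ and $R^*=\{\p{*}\}$, each with weight $+1$, which is the count the lemma asserts. Your alternative binomial-inversion route is also valid and is essentially the standard argument one would expect in \cite{BenyiHajnal2017}; the involution version has the advantage of explaining combinatorially \emph{why} the evaluation at $x=-1$ produces this restricted count, which fits the spirit of the paper.
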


\begin{theorem}\label{theorem: pE_pBC}
	For $n, k \geq 0$, it holds
	\begin{align}\label{formula:pE_pBC_1} 
		\widetilde{E}_{n}^{(-k)} = \sum_{\ell=0}^{n} \binom{n}{\ell} \widetilde{E}_{\ell}^{(0)} C_{n-\ell}^{(-k)} 4^{n-\ell}\quad \mbox{and}\quad		\widehat{E}_{n}^{(-k)} = \sum_{\ell=0}^{n} \binom{n}{\ell} \widehat{E}_{\ell}^{(0)} C_{n-\ell}^{(-k)} 4^{n-\ell}.
	\end{align}
\end{theorem}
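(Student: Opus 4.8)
The plan is to argue bijectively, using the interpretations of $\widetilde{E}_n^{(-k)}$ and $\widehat{E}_n^{(-k)}$ as the numbers of odd, resp.\ even, $(n,k)$-$E$-sequences, together with \cref{lemma: pB_type_C}. I would start from an odd $(n,k)$-$E$-sequence and split the labels $\{\p{1},\dots,\p{n}\}$ according to where each one appears: let $\ell$ be the number of red elements lying either in the special block $\overline{R}$ or in the extra red block $R^*$, and let the remaining $n-\ell$ be the red elements occurring in the ordinary red blocks. Choosing which $\ell$ labels form the first group contributes the factor $\binom{n}{\ell}$.

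For the first group, the relevant data is a partition of these $\ell$ elements into $\overline{R}$ (which has odd size) and $R^*\setminus\{\p{*}\}$, with a sign attached to each element of the latter set; reinstating the markers $\p{*}$ and $\k{*}$, this is exactly an odd $(\ell,0)$-$E$-sequence, so it is counted by $\widetilde{E}_\ell^{(0)}$ (the interpretation theorem specialized to $k=0$; note that there are no ordinary blocks when $k=0$). For the second group, one forgets the signs on the $n-\ell$ ordinary red elements and forgets how each ordinary red block is split into a left and a right part: what remains is an $(n-\ell)\times k$-Callan sequence whose extra red block equals $\{\p{*}\}$, and by \cref{lemma: pB_type_C} the number of such sequences is $C_{n-\ell}^{(-k)}$. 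Restoring the discarded information amounts to independently choosing, for each of the $n-\ell$ red elements, one of two signs and one of two sides, a factor $4^{n-\ell}$. Multiplying these counts and summing over $\ell$ yields the asserted formula for $\widetilde{E}_n^{(-k)}$; the argument for $\widehat{E}_n^{(-k)}$ is the same, with even $E$-sequences in place of odd ones and $\widehat{E}_\ell^{(0)}$ in place of $\widetilde{E}_\ell^{(0)}$.

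The point demanding care --- and the main obstacle in writing this cleanly --- is to check that the inverse map reproduces a single $(n,k)$-$E$-sequence without ambiguity. Concretely, from the odd $(\ell,0)$-$E$-sequence one recovers $\overline{R}$ and the signed set $S := R^*\setminus\{\p{*}\}$; from the decorated Callan sequence one recovers the ordinary pairs and the blue extra block $B^*$; and the reassembled object has extra pair $(B^*,\{\p{*}\}\cup S)$, ordinary pairs as given, and special block $\overline{R}$. Its parity is governed solely by $|\overline{R}|$, which is exactly why the two sub-objects can be combined freely. Verifying that this is a genuine two-sided bijection --- in particular that the markers $\p{*}$ and $\k{*}$ are shared consistently between the two parts, so that no $E$-sequence is produced twice or missed --- is routine but contains what little content the proof has.
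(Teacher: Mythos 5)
Your proposal is correct and follows essentially the same route as the paper's proof: split off the $\ell$ red elements of $\overline{R}\cup R^*$ (factor $\binom{n}{\ell}$), count their signed arrangement by $\widetilde{E}_\ell^{(0)}$ (resp.\ $\widehat{E}_\ell^{(0)}$), and count the remaining decorated Callan sequence with extra red block $\{\p{*}\}$ by $C_{n-\ell}^{(-k)}4^{n-\ell}$ via \cref{lemma: pB_type_C}. The only difference is that you spell out the inverse of the decomposition explicitly, which the paper leaves implicit.
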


\begin{proof}
	The proof is based on the same idea as that of \cref{theorem:Ohno-Sasaki}. Let $\ell$ be the number of \red elements in the set $S\cup R^{*}$.
	Take now an $(n-\ell, k)$-Callan sequence with the property that the extra block $R^*$ contains only the element $\p{*}$. 
	Add a sign and a position (left or right part) to each \red element in the Callan sequence. We can combine now our ``decorated" Callan sequence with the sets $S$ and $R^*$ to obtain a valid $E$-sequence. 
	We choose the $\ell$ elements in $\binom{n}{\ell}$ ways, we construct the sets  $S$ and $R^*$ in $\widetilde{E}_{\ell}^{(0)} $ ways and the decorated Callan sequences in $C_{n-\ell}^{(-k)} 4^{n-\ell}$ ways. 
\end{proof}

We recall here the special case $k=1$, that is, the Corollary 10.2 in~\cite{OhnoSasaki2017} and its analogue for the poly-Euler numbers of the second kind.
\begin{corollary}\cite{OhnoSasaki2017}\label{corollary:pE_special_1}
	For $n \geq 0$, we have 
	\begin{align*}
		\widetilde{E}_{n}^{(-1)} = \sum_{\ell=0}^{n}\binom{n}{\ell} \widetilde{E}_{\ell}^{(0)}4^{n-\ell} \quad \mbox{and}\quad \widehat{E}_{n}^{(-1)} = \sum_{\ell=0}^{n}\binom{n}{\ell} \widehat{E}_{\ell}^{(0)}4^{n-\ell}
	\end{align*}
\end{corollary}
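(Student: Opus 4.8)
The plan is to derive \cref{corollary:pE_special_1} as the special case $k=1$ of \cref{theorem: pE_pBC}. The only input I need beyond that theorem is the evaluation $C_m^{(-1)} = 1$ for every $m \geq 0$, after which the claimed formulas are precisely the $k=1$ instances of the two identities in \cref{formula:pE_pBC_1}.

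First I would verify $C_m^{(-1)} = 1$ combinatorially using \cref{lemma: pB_type_C}: $C_m^{(-1)}$ counts $(m,1)$-Callan sequences in which the extra red block contains only $\p{*}$. Since $k=1$, the blue set is $K = \{\k{1}, \k{*}\}$. If there is at least one ordinary Callan pair, then $\k{1}$ must lie in some ordinary blue block and $\k{*}$ in the extra blue block; since each blue block is non-empty and there is only one non-$\k{*}$ blue element, there can be exactly one ordinary pair, and all of $\{\p{1}, \dots, \p{m}\}$ must go into its red block (the extra red block already being $\{\p{*}\}$ only). If there is no ordinary pair, then $\k{1}$ would have to be in the extra blue block, but then the extra red block must also be non-empty beyond $\p{*}$ — unless $m=0$ — so for $m \geq 1$ this case is impossible, and for $m=0$ it gives the single empty sequence. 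In all cases there is exactly one such Callan sequence, so $C_m^{(-1)} = 1$. (Alternatively one can simply cite that $C_m^{(-1)} = 1$ is classical, e.g. from \cite{Kaneko2010, BenyiHajnal2017}.)

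Then I would substitute $k=1$ into \cref{formula:pE_pBC_1}. The first identity there reads $\widetilde{E}_n^{(-1)} = \sum_{\ell=0}^n \binom{n}{\ell} \widetilde{E}_\ell^{(0)} C_{n-\ell}^{(-1)} 4^{n-\ell}$, and replacing $C_{n-\ell}^{(-1)}$ by $1$ yields exactly $\widetilde{E}_n^{(-1)} = \sum_{\ell=0}^n \binom{n}{\ell} \widetilde{E}_\ell^{(0)} 4^{n-\ell}$. The same substitution in the second identity gives $\widehat{E}_n^{(-1)} = \sum_{\ell=0}^n \binom{n}{\ell} \widehat{E}_\ell^{(0)} 4^{n-\ell}$. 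This completes the proof.

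The only point requiring any care is the evaluation $C_m^{(-1)} = 1$; everything else is a direct specialization, so there is essentially no obstacle. If one prefers a purely combinatorial account one can restate the argument of \cref{theorem: pE_pBC} directly in the case $k=1$: after choosing the $\ell$ elements of $\overline{R} \cup R^*$ in $\binom{n}{\ell}$ ways and building those two sets with signs in $\widetilde{E}_\ell^{(0)}$ (resp. $\widehat{E}_\ell^{(0)}$) ways, the remaining $n-\ell$ red elements must all lie in the single ordinary red block (since $k=1$ forces at most one ordinary pair), and each of them receives a sign and a left/right position, contributing $4^{n-\ell}$. Summing over $\ell$ gives the stated formulas.
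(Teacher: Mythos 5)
Your proof is correct, but your primary route differs from the paper's. You deduce the corollary by specializing \cref{theorem: pE_pBC} at $k=1$ together with the evaluation $C_m^{(-1)}=1$, which you justify correctly both combinatorially (via \cref{lemma: pB_type_C}: with $K=\{\k{1},\k{*}\}$ and the extra red block forced to be $\{\p{*}\}$, there is exactly one admissible $(m,1)$-Callan sequence) and by citation. The paper instead gives a standalone direct count: $\widetilde{E}_n^{(-1)}$ counts odd $E$-sequences with a single blue element, so there is at most one ordinary pair; the $\ell$ elements of $\overline{R}\cup R^*$ contribute $\binom{n}{\ell}\widetilde{E}_\ell^{(0)}$ and the remaining $n-\ell$ elements, all forced into the unique ordinary red block, each receive a sign and a left/right position, giving $4^{n-\ell}$ (with $\ell=n$ covering the case of no ordinary pair). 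Your closing alternative paragraph is essentially this argument, so the two proofs coincide in substance; the difference is only in packaging. The specialization route is arguably cleaner in that it makes the logical dependence on the general theorem explicit, at the cost of needing the extra (easy) fact $C_m^{(-1)}=1$, while the paper's direct count keeps the corollary self-contained and illustrates the $E$-sequence model once more.
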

\begin{proof}
	The poly-Euler number $\widetilde{E}_{n}^{(-1)}$ counts odd $E$-sequences with only one \blue element. 
	Hence, there is at most one ordinary pair. The term $\widetilde{E}_{\ell}^{(0)}$ gives the number of constructing $S\cup R^*$. We determine the sign and the position of each \red element in an ordinary pair in $4^{n-\ell}$ ways. 
\end{proof}

Using the results of \cref{lemma:pE_special_case}, we have \cref{corollary:pE_pBC}. 

\begin{corollary}\label{corollary:pE_pBC}
	For $n,k \geq 0$, it holds 
	\begin{align*}
		\widetilde{E}_{n}^{(-k)} = \sum_{\ell=0}^{n} \binom{n}{\ell} \frac{3^\ell-1}{2}C_{n-\ell}^{(-k)} 4^{n-\ell} \quad \mbox{and}\quad
		\widehat{E}_{n}^{(-k)} = \sum_{\ell=0}^{n} \binom{n}{\ell} \frac{3^\ell+1}{2}C_{n-\ell}^{(-k)} 4^{n-\ell}.
	\end{align*}
\end{corollary}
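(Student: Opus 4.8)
The plan is to combine the two identities of \cref{theorem: pE_pBC} with the closed formulas of \cref{lemma:pE_special_case}, rather than reproving anything from scratch. Indeed, \cref{theorem: pE_pBC} gives
\[
\widetilde{E}_{n}^{(-k)} = \sum_{\ell=0}^{n} \binom{n}{\ell} \widetilde{E}_{\ell}^{(0)} C_{n-\ell}^{(-k)} 4^{n-\ell}
\quad \mbox{and}\quad
\widehat{E}_{n}^{(-k)} = \sum_{\ell=0}^{n} \binom{n}{\ell} \widehat{E}_{\ell}^{(0)} C_{n-\ell}^{(-k)} 4^{n-\ell},
\]
and \cref{lemma:pE_special_case} tells us that $\widetilde{E}_{\ell}^{(0)} = (3^\ell-1)/2$ and $\widehat{E}_{\ell}^{(0)} = (3^\ell+1)/2$ for all $\ell \geq 0$. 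Substituting these evaluations directly into the summands yields exactly the two claimed formulas. So the corollary is an immediate consequence of results already proved, and the ``proof'' is a one-line substitution.

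The only subtlety worth a remark is the range of $\ell$: in \cref{lemma:pE_special_case} the formulas for $\widetilde{E}_\ell^{(0)}$ and $\widehat{E}_\ell^{(0)}$ are stated for $n \geq 0$, hence in particular for $\ell = 0$ (where $(3^0-1)/2 = 0$ and $(3^0+1)/2 = 1$, matching the boundary values $\widetilde{E}_0^{(0)} = 0$, $\widehat{E}_0^{(0)} = 1$ read off from \cref{table:poly_Euler}), so the substitution is valid term-by-term over the entire summation range $0 \leq \ell \leq n$ with no edge cases to handle separately.

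There is essentially no obstacle here — the work was done in \cref{theorem: pE_pBC} and \cref{lemma:pE_special_case}. I would simply write: \emph{By \cref{theorem: pE_pBC} and \cref{lemma:pE_special_case}, substituting $\widetilde{E}_\ell^{(0)} = (3^\ell-1)/2$ and $\widehat{E}_\ell^{(0)} = (3^\ell+1)/2$ into \cref{formula:pE_pBC_1} gives the claimed identities.} If one wanted a self-contained combinatorial reading, one could alternatively re-run the argument of \cref{theorem: pE_pBC} but replace the factor counting constructions of $\overline{R}$ and $R^*$ by the explicit count $(3^\ell \mp 1)/2$ established via the sign-word involution in the proof of \cref{lemma:pE_special_case}; this would make the corollary independent of the intermediate closed forms, but it is not necessary.
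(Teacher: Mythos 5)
Your proposal is correct and is exactly the paper's route: the paper derives \cref{corollary:pE_pBC} by substituting the closed forms of \cref{lemma:pE_special_case} into the identities of \cref{theorem: pE_pBC}, with no additional argument. Your remark on the $\ell=0$ boundary values is a harmless extra check.
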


Though the following theorem is clear from the two previous identities, we give the combinatorial explanation. 

\begin{theorem}
For $n, k \geq 0$, we have 
\begin{align*}
	\widehat{E}_n^{(-k)}-\widetilde{E}_n^{(-k)} = \sum_{\ell=0}^{n}\binom{n}{\ell}C_{n-\ell}^{(-k)}4^{n-\ell}.
\end{align*}
\end{theorem}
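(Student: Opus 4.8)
The plan is to give a direct combinatorial (bijective) argument, paralleling the proofs of \cref{theorem: pE_pBC} and \cref{corollary:pE_special_1}, rather than merely subtracting the two identities in \cref{corollary:pE_pBC}. Recall that $\widetilde{E}_n^{(-k)}$ counts odd $(n,k)$-$E$-sequences and $\widehat{E}_n^{(-k)}$ counts even $(n,k)$-$E$-sequences, so the left-hand side counts the signed difference between even and odd $E$-sequences, which we must identify with the right-hand side $\sum_{\ell=0}^n \binom{n}{\ell} C_{n-\ell}^{(-k)} 4^{n-\ell}$.

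First I would peel off, exactly as in \cref{theorem: pE_pBC}, the red elements that land in $\overline{R} \cup R^*$. An $(n,k)$-$E$-sequence is built from (i) a choice of $\ell$ red elements that go into $\overline{R}$ and into the extra block $R^*$, made in $\binom{n}{\ell}$ ways, together with a splitting of these $\ell$ elements into the two sets and an assignment of a ``sign'' $\{0,+,-\}$ to each (where $0$ marks membership in $\overline{R}$), and (ii) a ``decorated'' $(n-\ell,k)$-Callan sequence whose extra red block contains only $\p{*}$, decorated by a sign and a part (left/right) for each of its $n-\ell$ red elements; by \cref{lemma: pB_type_C} the decorated Callan part contributes $C_{n-\ell}^{(-k)} 4^{n-\ell}$. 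The parity of the $E$-sequence is the parity of $|\overline{R}|$, i.e.\ the number of $0$'s among the $\ell$ symbols in (i) — the decorated Callan part contributes nothing to the parity. Hence
\[
\widehat{E}_n^{(-k)} - \widetilde{E}_n^{(-k)} = \sum_{\ell=0}^n \binom{n}{\ell}\left( \mathrm{ev}_\ell - \mathrm{od}_\ell \right) C_{n-\ell}^{(-k)} 4^{n-\ell},
\]
where $\mathrm{ev}_\ell$ (resp.\ $\mathrm{od}_\ell$) is the number of words of length $\ell$ over $\{0,+,-\}$ with an even (resp.\ odd) number of $0$'s.

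The remaining step is to show $\mathrm{ev}_\ell - \mathrm{od}_\ell = 1$ for every $\ell \geq 0$. This is precisely the content of the involution in the proof of \cref{lemma:pE_special_case}: the map that flips the first non-$-$ symbol between $0$ and $+$ is a sign-reversing involution on words over $\{0,+,-\}$ with a unique fixed point, namely $-\,-\cdots-$, which has zero (hence an even number of) $0$'s. Therefore $\mathrm{ev}_\ell - \mathrm{od}_\ell = 1$, and the sum collapses to $\sum_{\ell=0}^n \binom{n}{\ell} C_{n-\ell}^{(-k)} 4^{n-\ell}$, as claimed. I do not anticipate a genuine obstacle here; the only point requiring care is checking that the decoration data (signs and left/right parts) is genuinely parity-neutral and is shared identically by the two models, so that it factors out cleanly and the whole difference is governed by the $\{0,+,-\}$-word count — but this is immediate from the definitions of odd and even $E$-sequences, which differ \emph{only} in the parity constraint on $|\overline{R}|$.
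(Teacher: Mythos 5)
Your proof is correct and takes essentially the same route as the paper's: the paper likewise extends the involution of \cref{lemma:pE_special_case} to the union of even and odd $E$-sequences and identifies the unpaired (fixed) sequences as those with $\overline{R}=\emptyset$ and all-negative $R^*$, counting them via \cref{lemma: pB_type_C} as $\sum_{\ell}\binom{n}{\ell}C_{n-\ell}^{(-k)}4^{n-\ell}$. Your reformulation — factoring out the decorated Callan part first and then applying the involution to the $\{0,+,-\}$-words — is just a cleaner bookkeeping of the same argument.
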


\begin{proof}
	We can extend the involution of the \cref{lemma:pE_special_case} on the union of the set of even and odd $E$-sequences. 
	It follows that $E$-sequences that have only one type of (say only negative signed) elements in $R^*$ and $S$ is empty can not be paired by the involution. 
	
	How many such $E$-sequences are there? We obtain such an $E$-sequence, if we choose $\ell$ \red elements for $R^*$, take a Callan sequence with $R^*$ empty (enumerated by poly-Bernoulli numbers of type $C$) and add the sign and position of the \red elements.  
\end{proof}

\cref{theo:pB_peuler} gives the relation between poly-Bernoulli numbers of type $B$ and poly-Euler numbers. 

\begin{theorem}\label{theo:pB_peuler}
For $n, k \geq 0$, it holds
\begin{align*}
	4^{n}B_n^{(-k)} = \sum_{m=0}^n \binom{n}{m} (\widetilde{E}_m^{(-k)}+ \widehat{E}_m^{(-k)}).
\end{align*}
\end{theorem}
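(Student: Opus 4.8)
The plan is to prove the identity bijectively. The left-hand side $4^{n}B_{n}^{(-k)}$ counts what I will call \emph{decorated Callan sequences} of size $n\times k$: a Callan sequence $(B_{1},R_{1})\cdots(B_{m},R_{m})\cup(B^{*},R^{*})$ of size $n\times k$ together with, for each numbered red element $\p{1},\dots,\p{n}$, a decoration taken from the four-element set $\{+,-\}\times\{\mathrm L,\mathrm R\}$ (a sign and a side). For the right-hand side, note that $\widetilde E_m^{(-k)}$ counts odd and $\widehat E_m^{(-k)}$ counts even $(m,k)$-$E$-sequences, so their sum counts \emph{all} $(m,k)$-$E$-sequences; hence $\sum_{m}\binom nm(\widetilde E_m^{(-k)}+\widehat E_m^{(-k)})$ counts pairs $(S,\mathcal E)$ where $S\subseteq\{\p{1},\dots,\p{n}\}$ and $\mathcal E$ is an $E$-sequence built on the red set $S\cup\{\p*\}$ and the blue set $K$.

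First I would define the map from a decorated Callan sequence to such a pair. Red elements in the ordinary blocks $R_{j}$ are kept with their sign, and their side-label decides whether they lie in the left or the right part of $R_{j}$; thus the four-element decoration of such an element is exactly the $E$-sequence decoration of an ordinary block, used verbatim. For a numbered red element in the extra block $R^{*}$ I reinterpret its decoration as one of four \emph{fates}: $(+,\mathrm L)\mapsto$ stay in $R^{*}$ with sign $+$; $(-,\mathrm L)\mapsto$ stay in $R^{*}$ with sign $-$; $(+,\mathrm R)\mapsto$ move the element into a new (initially empty) special block $\overline R$, forgetting its decoration; $(-,\mathrm R)\mapsto$ remove the element altogether. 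Carrying this out (and keeping $\p*$ in $R^{*}$ and $\k*$ in $B^{*}$) turns the decorated Callan sequence into an abundant Callan sequence carrying precisely the data of an $E$-sequence, on the set $S$ of elements that were not removed. All nonemptiness conditions survive — an ordinary red block stays nonempty under a left/right split, $R^{*}$ still contains $\p*$, and $\overline R$ is allowed to be empty — so $\mathcal E$ is a legitimate $E$-sequence and $(S,\mathcal E)$ is a valid target object.

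The inverse map is then forced: given $(S,\mathcal E)$, merge each split ordinary red block $R_{j}^{\mathrm L}\mid R_{j}^{\mathrm R}$ of $\mathcal E$ back into one Callan block $R_{j}$, recording each element's position as the side-component of its decoration; place every element of $\overline R$ into $R^{*}$ with decoration $(+,\mathrm R)$; place every element of $\{\p{1},\dots,\p{n}\}\setminus S$ into $R^{*}$ with decoration $(-,\mathrm R)$; and give each signed element already in $R^{*}$ the decoration $(+,\mathrm L)$ or $(-,\mathrm L)$ matching its sign. This produces a decorated Callan sequence of size $n\times k$, and the two constructions are visibly mutually inverse, which proves the theorem.

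The point that needs the most care — and where I would double-check everything — is the treatment of the extra block: even though an element sent to $\overline R$ or removed appears to lose its decoration, no information is actually lost, since the \emph{role} the element winds up in (a signed member of $R^{*}$, a member of $\overline R$, or an element outside $S$) records exactly which of the four decorations it had; this is precisely why the four fates biject with the four-element decoration set, and it is what makes the inverse well defined. As an independent check I would verify the identity algebraically: summing the two formulas of \cref{corollary:pE_pBC} gives $\widetilde E_m^{(-k)}+\widehat E_m^{(-k)}=\sum_{\ell}\binom m\ell 3^{\ell}4^{m-\ell}C_{m-\ell}^{(-k)}$, and substituting this into the right-hand side and reorganizing the resulting triple sum as a partition of the $n$ red elements into three labeled groups (``$3$-coloured'', ``Callan'', ``unused'') collapses it to $4^{n}\sum_{p}\binom np C_{p}^{(-k)}$, which equals $4^{n}B_{n}^{(-k)}$ by the identity $\sum_{p}\binom np C_{p}^{(-k)}=B_{n}^{(-k)}$ — itself obtained from \cref{lemma: pB_type_C} by stripping from the extra red block of an $(n,k)$-Callan sequence the numbered elements it contains.
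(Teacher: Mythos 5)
Your bijection is exactly the one the paper uses: the four decorations of a red element in the extra block correspond to the four fates (signed member of $R^*$, member of $\overline{R}$, or unchosen element), which is precisely the paper's rule of placing $\overline{R}$-elements in the right part with sign $+$ and unchosen elements in the right part with sign $-$. The proposal is correct and essentially identical to the paper's proof, with the algebraic cross-check via \cref{corollary:pE_pBC} as a harmless bonus.
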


\begin{proof}
	Take a Callan sequence and add to each \red element its sign ($+$ or $-$) and position (left or right). 
	The number of such decorated Callan sequences is $4^nB_n^{(-k)}$. Note that we separated also the extra block into two parts.

	On the other hand, choose first $m$ \red elements and construct from them an odd or an even $(m,k)$-$E$-sequence. 
	The extra \red block $R^*$ contains elements with signs. Consider these elements as being in the left part. 
	We insert elements into the right part according to the rule: Elements in $S$ will be added with a positive sign, while the remaining $n-m$ \red elements (that were not chosen for the construction of the $E$-sequence) with a negative sign. We obtain this way also all the decorated Callan sequences.  
\end{proof}

%-------------------------------------------------------------------
\subsection*{Acknowledgments} 

The authors would like to express their sincere gratitude to Takao Komatsu for his helpful comments.

%-------------------------------------------------------------------

\bibliographystyle{amsplain}%{amsalpha}
\bibliography{References1}

\end{document}